
\documentclass{article}

\usepackage[accepted]{icml2024}

\usepackage{ifthen}
\newboolean{cameraready}
\setboolean{cameraready}{true}


\usepackage[setbb]{kmath}
\usepackage{mathrsfs}
\usepackage{enumitem}
\usepackage{glossaries}
\usepackage{graphicx}
\usepackage{booktabs}
\usepackage[hidelinks]{hyperref}
\usepackage{amsmath}
\usepackage{amssymb}
\usepackage{stmaryrd}
\usepackage{mathtools}
\usepackage{amsthm}
\usepackage{lipsum}
\usepackage{soul}
\usepackage{pgfplots}
\usepackage[algo2e]{algorithm2e} 
\usepackage[capitalize,noabbrev]{cleveref}
\usepackage{bold-extra}
\usepackage{tabularx}

\theoremstyle{plain}
\newtheorem{proposition}{Proposition}

\newtheorem{definition}{Definition}

\newtheorem{lemma}{Lemma}
\pgfplotsset{compat=newest}
\usepgfplotslibrary{groupplots}
\usetikzlibrary{matrix}
\usetikzlibrary{arrows.meta}
\usepgfplotslibrary{fillbetween}
\usetikzlibrary{fit}

\pgfplotsset{
    ylabel right/.style={
        after end axis/.append code={
            \node [rotate=270, anchor=north] at (rel axis cs:1.35,0.5) {#1};
        }   
    }
}

\definecolor{mosekcolor}{RGB}{41,128,185}
\definecolor{cplexcolor}{RGB}{46,204,113}
\definecolor{gurobicolor}{RGB}{241,196,15}
\definecolor{l0bnbcolor}{RGB}{230,126,34}
\definecolor{bnbcolor}{RGB}{192,57,43}

\newcommand{\ppb}{\mathscr{P}}
\newcommand{\rpb}{\mathscr{R}}

\newcommand{\pobj}{p}
\newcommand{\robj}{r}
\newcommand{\dobj}{d}

\newcommand{\dfunc}{D}

\newcommand{\pvletter}{x}
\newcommand{\bvletter}{z}
\newcommand{\dvletter}{u}
\newcommand{\vvletter}{v}
\newcommand{\pv}{\mathbf{\pvletter}}
\newcommand{\bv}{\mathbf{\bvletter}}
\newcommand{\dv}{\mathbf{\dvletter}}
\newcommand{\vv}{\mathbf{\vvletter}}
\newcommand{\pvi}[1]{\pvletter_{#1}}

\newcommand{\vvi}[1]{\vvletter_{#1}}

\newcommand{\pdim}{n}
\newcommand{\ddim}{m}
\newcommand{\dic}{\mathbf{A}}
\newcommand{\atom}[1]{\mathbf{a}_{#1}}
\newcommand{\reg}{\lambda}
\newcommand{\lossfunc}{f}
\newcommand{\pertfunc}{h}
\newcommand{\regfunc}{g}
\newcommand{\pset}{\mathcal{X}}
\newcommand{\genericfunc}{\omega}

\newcommand{\setidx}{\mathcal{S}}
\newcommand{\idxentry}{i}

\newcommand{\oneSymb}{1}
\newcommand{\zeroSymb}{0}
\newcommand{\noneSymb}{\bullet}
\newcommand{\setzero}{\setidx_{\zeroSymb}}
\newcommand{\setone}{\setidx_{\oneSymb}}
\newcommand{\setnone}{\setidx_{\noneSymb}}
\newcommand{\setidxzero}{\mathcal{I}_0}
\newcommand{\setidxone}{\mathcal{I}_1}

\newcommand{\nodeSymb}{\nu}
\newcommand{\node}[1]{#1^\nodeSymb}

\newcommand{\succnode}[2]{\nodeSymb_{#1,#2}}

\newcommand{\pivot}[1]{\Delta_{#1}}

\newcommand{\sparsitylevel}{k}
\newcommand{\groundtruth}{\pv^{\dagger}}

\newcommand{\obs}{\mathbf{y}}
\newcommand{\bigM}{M}
\newcommand{\noise}{\mathbf{e}}
\newcommand{\corrmat}{\mathbf{\Sigma}}
\newcommand{\corrmatel}[1]{\Sigma_{#1}}
\newcommand{\corrparam}{\rho}
\newcommand{\snr}{\tau}


\DeclareMathOperator{\argmax}{argmax}

\newcommand{\1}{\mathbf{1}}
\newcommand{\0}{\mathbf{0}}
\newcommand{\bigO}{\mathcal{O}}
\newcommand{\intervint}[2]{\llbracket#1,#2\rrbracket}
\newcommand{\LB}[1]{\tilde{#1}}
\newcommand{\UB}[1]{\bar{#1}}
\newcommand{\card}[1]{|#1|}
\newcommand{\transp}[1]{#1^{\mathrm{T}}}
\newcommand{\opt}[1]{#1^{\star}}
\newcommand{\conj}[1]{#1^{\star}}
\newcommand{\biconj}[1]{#1^{\star\star}}
\newcommand{\abs}[1]{|#1|}
\newcommand{\icvx}{\eta}
\newcommand{\norm}[2]{\|#1\|_#2}
\newcommand{\pospart}[1]{[#1]_+}

\newcommand{\subdiff}{\partial}
\newcommand{\separable}[2]{#1_{#2}}
\newcommand{\dom}{\mathrm{dom}\,}

\newcommand{\proofset}{\mathcal{C}}
\newcommand{\prooffun}{\omega}


\newcommand{\eg}{\textit{e.g.}}

\ifthenelse{\boolean{cameraready}}{
  \newcommand{\AddNote}[1]{}
  \newcommand{\AddTodo}[1]{}

  \newcommand{\ProvideEditionMacros}[2]{%
    \expandafter\NewDocumentCommand\csname Add#1\endcsname{gg}{##1}%
    \expandafter\NewDocumentCommand\csname Rem#1\endcsname{gg}{}%
    \expandafter\NewDocumentCommand\csname Sup#1\endcsname{gg}{}%
  }
}{
  \newcommand{\AddNote}[1]{\textcolor{red}{[Note: #1]}}
  \newcommand{\AddTodo}[1]{\textcolor{red}{[Todo: #1]}}

  \newcommand{\ProvideEditionMacros}[2]{%
    \expandafter\NewDocumentCommand\csname Add#1\endcsname{gg}{\textcolor{#2}{##1}}%
    \expandafter\NewDocumentCommand\csname Rem#1\endcsname{gg}{\textcolor{#2}{[#1: \small ##1]}}%
    \expandafter\NewDocumentCommand\csname Sup#1\endcsname{gg}{\textcolor{#2}{\st{##1}}}%
  }
}
\newacronym{bnb}{BnB}{Branch-and-Bound}
\newacronym{mip}{MIP}{Mixed Integer Program}

\ProvideEditionMacros{TG}{blue}
\ProvideEditionMacros{CH}{orange}
\ProvideEditionMacros{CE}{red}
\ProvideEditionMacros{AA}{green}

\newcommand{\thepapertitle}{A New Branch-and-Bound Pruning Framework for $\ell_0$-Regularized Problems}

\icmltitlerunning{\thepapertitle}

\begin{document}

\twocolumn[%
\icmltitle{\thepapertitle}

\begin{icmlauthorlist}
    \icmlauthor{Théo Guyard}{tg}
    \icmlauthor{Cédric Herzet}{ch}
    \icmlauthor{Clément Elvira}{ce}
    \icmlauthor{Ay\c{s}e-Nur Arslan}{aa}
\end{icmlauthorlist}

\icmlaffiliation{tg}{Inria and Insa, Univ Rennes, CNRS, IRMAR - UMR 6625, Rennes, France}
\icmlaffiliation{ch}{Ensai, Univ Rennes, CNRS, CREST - UMR 9194, Rennes, France}
\icmlaffiliation{ce}{CentraleSupélec, Univ Rennes, CNRS, IETR - UMR 6164, Rennes, France}
\icmlaffiliation{aa}{Inria, Univ Bordeaux, CNRS, IMB - UMR 5251, Bordeaux, France}

\icmlcorrespondingauthor{Théo Guyard}{theo.guyard@insa-rennes.fr}

\icmlkeywords{L0-regularization, Branch-and-Bound, Pruning test}

\vskip 0.3in
]

\printAffiliationsAndNotice{}

\begin{abstract}
    We consider the resolution of learning problems involving $\ell_0$-regularization via \gls{bnb} algorithms.~These methods explore regions of the feasible space of the problem and check whether they do not contain solutions through ``pruning tests''.
    In standard implementations, evaluating a pruning test requires to solve a convex optimization problem, which may result in computational bottlenecks.
    In this paper, we present an alternative to implement pruning tests for some generic family of $\ell_0$-regularized problems.~Our proposed procedure allows the simultaneous assessment of several regions and can be embedded in standard \gls{bnb} implementations with a negligible computational overhead. We show through numerical simulations that our pruning strategy can improve the solving time of \gls{bnb} procedures by several orders of magnitude for typical problems encountered in machine-learning applications. 
\end{abstract}


\section{Introduction}
\label{sec:intro}

This paper focuses on optimization problems of the form: 
\begin{equation}
    \stepcounter{equation}
    \tag{$\mathscr{P}$}
    \label{prob:prob}
    \opt{\pobj} = \textstyle\inf_{\pv \in \kR^{\pdim}} \lossfunc(\dic\pv) 
    + \regfunc(\pv)
\end{equation}
where $\lossfunc: \kR^{\ddim} \mapsto \kR\cup\{+\infty\}$ is a loss function, $\dic\in \kR^{\ddim\times\pdim}$ is a given matrix and $\regfunc: \kR^{\pdim} \mapsto \kR\cup\{+\infty\}$ is a regularization function expressed as
\begin{equation}
    \label{eq:regfunc}
    \textstyle
    \regfunc(\pv) = \reg\norm{\pv}{0} + \sum_{\idxentry=1}^\pdim\pertfunc(\pvi{\idxentry})
\end{equation}
for some $\reg > 0$ and $\pertfunc: \kR \mapsto \kR \cup \{+\infty\}$.
On the one hand, the so-called ``$\ell_0$-norm'' is defined as
\begin{equation*}
    \norm{\pv}{0} = \mathrm{card}(\kset{\idxentry \in \intervint{1}{\pdim}}{\pvi{\idxentry}\neq 0})
\end{equation*}
and promotes sparsity in the optimizers of \eqref{prob:prob} by counting the number of non-zero elements in its argument.
On the other hand, the term $\pertfunc(\cdot)$ allows to enforce additional application-specific properties, see \eg, \cite{bruer2015designing,bertsimas2021unified}.
Solving problem~\eqref{prob:prob} is of interest in many fields including machine learning, high-dimensional statistics or signal processing.
This problem is for instance linked to feature selection \cite{bertsimas2016best}, compressive sensing \cite{candes2007sparse}, principal component analysis \cite{bertsimas2022solving}, sparse SVM \cite{tan2010learning} or neural network pruning \cite{carreira2018learning}, among others.
The reader can refer to \cite{tillmann2021cardinality,bertsimas2021unified} for an extensive review of related applications.

Since~\eqref{prob:prob} is NP-hard~\cite{nguyen2019np}, the main trends of work in the last decades have focused on addressing relaxed instances of this problem or inferring its solutions through heuristic procedures~\cite{tropp2010computational}.
However, it has recently been emphasized that the solutions of the original problem \eqref{prob:prob} may enjoy much better statistical properties than those obtained by these sub-optimal strategies~\cite{bertsimas2020sparse,zhong2022L}. 
Consequently, there has recently been a revived interest in solving \eqref{prob:prob} exactly and several studies have emphasized that discrete-optimization tools can sometimes provide tractable solutions, see \eg~\cite{bertsimas2016best}.
In this vein, state-of-the-art procedures are mostly based on \gls{bnb} algorithms whose process can be specialized to exploit the structure of \eqref{prob:prob} and achieve competitive running times \cite{ben2022global,hazimeh2022sparse}.

In a nutshell, \gls{bnb} algorithms solve an optimization problem by successively: 
\textit{i)} dividing the feasible space into regions and 
\textit{ii)} trying to detect regions that cannot contain a minimizer. 
This second step is commonly referred to as ``pruning test'' and is based on the construction of some lower bounds on the value that the objective function can take.
A standard approach to constructing these lower bounds is based on the minimization of a convex lower-approximation of the objective function of~\eqref{prob:prob}, called ``relaxation''.
This operation usually dominates the complexity of \gls{bnb} algorithms and can lead to tractability issues for some problem instances. 
In this work, we propose a strategy to alleviate this computational bottleneck.

\subsection{Contributions}
\label{sec:intro:contributions}

We present a novel methodology to implement pruning tests in \gls{bnb} algorithms that does not leverage the solution of a convex optimization problem and show that it can be embedded at virtually no cost in standard \gls{bnb} implementations. 
Numerical simulations reveal that our strategy can improve the solving time by several orders of magnitude.
Our contribution follows some recent lines of work \cite{atamturk2020safe,guyard2022node} and exploits Fenchel-Rockafellar duality~\cite{rockafellar1967duality}.
In contrast to these prior contributions which focused on specific\footnote{
    Prior contributions focused on instances defined by specific choices of the functions $\lossfunc(\cdot)$ and $\pertfunc(\cdot)$.
    These works referred to their methodology as ``screening''.
    In this paper, we rather use the terminology ``pruning'' which is standard in the \gls{bnb} literature.
} instances of problem \eqref{prob:prob}, we introduce a general framework encompassing a large family of problems typically encountered in machine learning. 
Specifically, our framework applies under the following set of hypotheses:
\begin{enumerate}[topsep=2pt,itemsep=3pt,parsep=0pt, label=({$\mathrm{H}_\arabic*$}),leftmargin=*]
    \item The function $\lossfunc(\cdot)$ is proper, closed and convex.\label{hyp:lossfunc}
    \item The function $\pertfunc(\cdot)$ is proper, closed and convex.\label{hyp:pertfunc}
    \item $\pvi{}=0$ is an accumulation point\footnote{
        \(\pvi{}\) is said to be an accumulation point of a set \(\proofset\subseteq\kR\) if for all neighborhoods \(\mathcal{N}\) of \(\pvi{}\), the set \(\mathcal{N}\cap\proofset\setminus\{\pvi{}\}\) is nonempty.  
    } of $\dom(\pertfunc)$. \label{hyp:0-in-intdom}
    \item \(0\in\dom(\pertfunc)\) and \(\pertfunc(0) = 0\).\label{hyp:zero-minimized}
\end{enumerate}
Hypotheses \ref{hyp:lossfunc}-\ref{hyp:zero-minimized} are verified by many functions encountered in standard machine-learning problems. 
For example, this includes least-squares, logistic or hinge losses \cite{wang2020comprehensive} and terms $\pertfunc(\cdot)$ constructed as mixed-norms \cite{dedieu2021learning} or as the logarithm of Bayesian priors \cite{polson2019bayesian}. 

Finally, the complexity analysis of our method (see \Cref{sec:screening:complexity}) is discussed in view of the following assumption which holds for a wide range of problem instances encountered in practice, see \eg, Section~4.4.16 in \citep{beck2017first}: 
\begin{enumerate}[topsep=2pt,itemsep=3pt,parsep=0pt, label=({\(\mathrm{H}_\arabic*\)}),leftmargin=*]
    \setcounter{enumi}{4}
    \item The evaluation complexity of the convex conjugate of \(\lossfunc(\cdot)\) scales as \(\bigO(\ddim)\).
    \label{hyp:cost-conj}
\end{enumerate}

\subsection{Outline}

The rest of the paper is organized as follows. 
In \Cref{sec:bnb}, we introduce the main ingredients of standard \gls{bnb} algorithms.  
In \Cref{sec:screening}, we then present our new pruning strategy and discuss its impact on the \gls{bnb} algorithm.
Finally, our method is assessed numerically in \Cref{sec:numerics}.
To ease our exposition, all the proofs are deferred to \Cref{sec:proofs}. 

\subsection{Notational Conventions}
\label{sec:intro:notations}

Classical letters (\eg, $x$), boldface lowercase letters (\eg, $\pv$) and boldface uppercase letters (\eg, $\dic$) represent scalars, vectors and matrices, respectively.
$\0$ and $\1$ denote the all-zero and all-one vectors whose dimension is usually clear from the context. 
Vectorial operations involving equalities or inequalities have to be understood coordinate-wise.
We note $\pvi{\idxentry}$  the $\idxentry$-th entry of a vector $\pv$ and $\pv_{\setidx}$ its restriction to the entries indexed by some set of indices $\setidx$.
Similarly, we note $\atom{\idxentry}$ the $\idxentry$-th column of a matrix $\dic$ and $\dic_{\setidx}$ its restriction to the columns indexed by $\setidx$. 
$\intervint{a}{b}$ corresponds to the set of integers ranging from $a$ to $b$. 
The notations $\card{\cdot}$ and $\cdot\setminus\cdot$ are used to denote respectively the cardinality of a set and the difference between two sets.
$\icvx(\cdot)$ stands for the convex indicator function defined as $\icvx(\cdot) = 0$ if the condition in the parentheses is fulfilled and $\icvx(\cdot) = +\infty$ otherwise.
We let $\pospart{\pvi{}} = \max(\pvi{},0)$.
Given some proper function $\genericfunc(\cdot)$, we note $\dom(\genericfunc)$ its domain, $\conj{\genericfunc}(\cdot)$ its convex conjugate, $\biconj{\genericfunc}(\cdot)$ its convex  biconjugate 
and $\subdiff\genericfunc(\cdot)$ its subdifferential.
We refer to \cite{beck2017first} for a precise definition of these notions. 
Finally, we employ the notational convention $\genericfunc(\pv) = \sum_{\idxentry}\separable{\genericfunc}{\idxentry}(\pvi{\idxentry})$ when 
$\genericfunc(\cdot)$ is separable.


\section{Branch-and-Bound Algorithms}
\label{sec:bnb}

In this section, we outline the main ingredients of \gls{bnb} procedures.
We focus on the elements necessary to present our contribution and refer the reader to Chap.~5 in \cite{locatelli2013global} for a thorough description.

\begin{figure*}[t!]
    \centering
\begin{tikzpicture}

    \begin{scope}[xshift=-10cm]
        \node at (0,0) (node0) {};
        \draw[
            ultra thick,
            top color = white,
            bottom color = blue!30,
        ] (node0) circle (10pt) node {$\nu_0$};
        \node at ($(node0.south) + (0,-0.5)$) {\small{Solve $(\rpb^{\nodeSymb_0})$}};
        \node at ($(node0.south) + (0,-0.5)$) {}; 
    \end{scope}

    \begin{scope}[xshift=-6.75cm]
        \node at (0,0) (node0) {};
        \draw[->,dashed,ultra thick] ($(node0)+(-2.5,0)$) -- ($(node0)+(-1,0)$) node[midway,above,font=\scriptsize] {}; 
        \draw[
            ultra thick,
            top color = white,
            bottom color = blue!30,
        ] (node0) circle (10pt) node {$\nu_0$};
        \node at ($(node0)+(-1.3,-1.4)$) (node1) {};
        \draw[
            ultra thick,
            top color = white,
            bottom color = blue!30,
        ] (node1) circle (10pt) node {$\nu_1$};
        \draw[ultra thick,->] ($(node0.south west)+(-0.2,0)$) -- ($(node1.north)+(0.2,0.2)$) node[midway,fill=white,draw,font=\scriptsize,inner sep=2] {$\idxentry_0 \rightarrow \setzero$};
        \node at ($(node1.south) + (0,-0.5)$) {\small{Solve $(\rpb^{\nodeSymb_1})$}};
        \node at ($(node0)+(1.3,-1.4)$) (node2) {};
        \draw[
            ultra thick,
            top color = white,
            bottom color = blue!30,
        ] (node2) circle (10pt) node {$\nu_2$};
        \draw[ultra thick,->] ($(node0.south east)+(0.2,0)$) -- ($(node2.north)+(-0.2,0.2)$) node[midway,fill=white,draw,font=\scriptsize,inner sep=2] {$\idxentry_0 \rightarrow \setone$};
        \node at ($(node2.south) + (0,-0.5)$) {\small{Solve $(\rpb^{\nodeSymb_2})$}};
    \end{scope}

    \begin{scope}[xshift=-1cm]
        \node at (0,0) (node0) {};
        \draw[->,dashed,ultra thick] ($(node0)+(-4.25,0)$) -- ($(node0)+(-1.25,0)$) node[midway,font=\scriptsize,align=center] {$\robj^{\nodeSymb_1} \leq \UB{\pobj}$ \\~\\ $\robj^{\nodeSymb_2} > \UB{\pobj}$};
        \draw[
            ultra thick,
            top color = white,
            bottom color = blue!30,
        ] (node0) circle (10pt) node {$\nu_0$};
        \node at ($(node0)+(-1.3,-1.4)$) (node1) {};
        \draw[
            ultra thick,
            top color = white,
            bottom color = blue!30,
        ] (node1) circle (10pt) node {$\nu_1$};
        \draw[ultra thick,->] ($(node0.south west)+(-0.2,0)$) -- ($(node1.north)+(0.2,0.2)$);
        \node at ($(node0)+(1.3,-1.4)$) (node2) {};
        \draw[
            ultra thick,
            top color = white,
            bottom color = red!30,
        ] (node2) circle (10pt) node {$\nu_2$};
        \draw[ultra thick,->] ($(node0.south east)+(0.2,0)$) -- ($(node2.north)+(-0.2,0.2)$);
        \node at ($(node1)+(-1.3,-1.4)$) (node3) {};
        \draw[
            ultra thick,
            top color = white,
            bottom color = blue!30,
        ] (node3) circle (10pt) node {$\nu_3$};
        \draw[ultra thick,->] ($(node1.south west)+(-0.2,0)$) -- ($(node3.north)+(0.2,0.2)$) node[midway,fill=white,draw,font=\scriptsize,inner sep=2] {$\idxentry_1 \rightarrow \setzero$};
        \node at ($(node3.south) + (0,-0.5)$) {\small{Solve $(\rpb^{\nodeSymb_3})$}};
        \node at ($(node1)+(1.3,-1.4)$) (node4) {};
        \draw[
            ultra thick,
            top color = white,
            bottom color = blue!30,
        ] (node4) circle (10pt) node {$\nu_4$};
        \draw[ultra thick,->] ($(node1.south east)+(0.2,0)$) -- ($(node4.north)+(-0.2,0.2)$) node[midway,fill=white,draw,font=\scriptsize,inner sep=2] {$\idxentry_1 \rightarrow \setone$};
        \node at ($(node4.south) + (0,-0.5)$) {\small{Solve $(\rpb^{\nodeSymb_4})$}};
    \end{scope}
    
    \begin{scope}[xshift=4.5cm]
        \node at (0,0) (node0) {};
        \draw[->,dashed,ultra thick] ($(node0)+(-4,0)$) -- ($(node0)+(-1,0)$) node[midway,font=\scriptsize,align=center] {$\robj^{\nodeSymb_3} \leq \UB{\pobj}$ \\~\\ $\robj^{\nodeSymb_4} > \UB{\pobj}$};
        \draw[
            ultra thick,
            top color = white,
            bottom color = blue!30,
        ] (node0) circle (10pt) node {$\nu_0$};
        \node at ($(node0)+(-1.3,-1.4)$) (node1) {};
        \draw[
            ultra thick,
            top color = white,
            bottom color = blue!30,
        ] (node1) circle (10pt) node {$\nu_1$};
        \draw[ultra thick,->] ($(node0.south west)+(-0.2,0)$) -- ($(node1.north)+(0.2,0.2)$);
        \node at ($(node0)+(1.3,-1.4)$) (node2) {};
        \draw[
            ultra thick,
            top color = white,
            bottom color = red!30,
        ] (node2) circle (10pt) node {$\nu_2$};
        \draw[ultra thick,->] ($(node0.south east)+(0.2,0)$) -- ($(node2.north)+(-0.2,0.2)$);
        \node at ($(node1)+(-1.3,-1.4)$) (node3) {};
        \draw[
            ultra thick,
            top color = white,
            bottom color = blue!30,
        ] (node3) circle (10pt) node {$\nu_3$};
        \draw[ultra thick,->] ($(node1.south west)+(-0.2,0)$) -- ($(node3.north)+(0.2,0.2)$);
        \node at ($(node1)+(1.3,-1.4)$) (node4) {};
        \draw[
            ultra thick,
            top color = white,
            bottom color = red!30,
        ] (node4) circle (10pt) node {$\nu_4$};
        \draw[ultra thick,->] ($(node1.south east)+(0.2,0)$) -- ($(node4.north)+(-0.2,0.2)$);
        \draw[ultra thick,->,dashed] ($(node3.south east)+(0.2,0)$) -- ($(node3.south east)+(0.4,-0.3)$);
        \draw[ultra thick,->,dashed] ($(node3.south west)+(-0.2,0)$) -- ($(node3.south west)+(-0.4,-0.3)$);
    \end{scope}
\end{tikzpicture}
    \caption{
        Illustration of the \gls{bnb} decision-tree exploration. We note that a relaxation has to be solved at each node of the tree\protect\footnotemark{} to evaluate the lower bound~\eqref{eq:std-lb} involved in pruning test~\eqref{eq:pruning-test}. 
        Here, the pruning test is passed for nodes $\nodeSymb_2$ and $\nodeSymb_4$.
    }
    \label{fig:bnb}
\end{figure*}
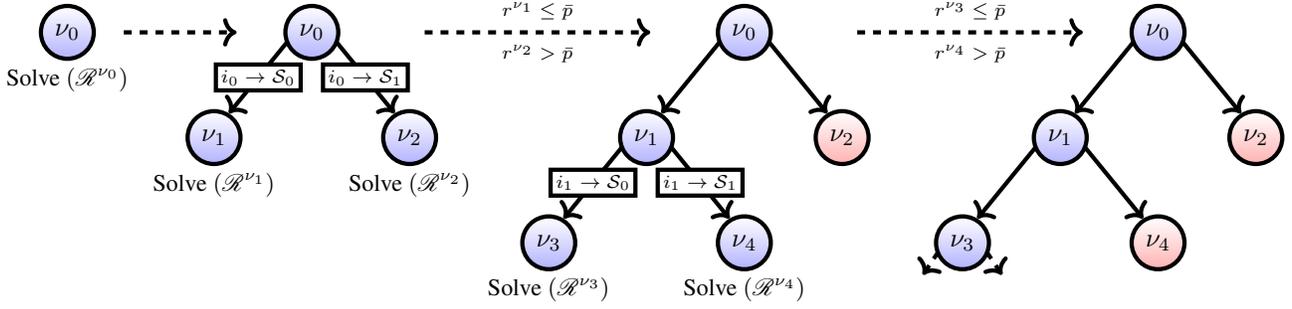

\subsection{Constructing and Pruning Regions}
\label{sec:bnb:pruning}

\gls{bnb} algorithms partition the feasible space of an optimization problem into regions and try to detect those that do not contain any optimizer.
In the specific context of \eqref{prob:prob}, standard \gls{bnb} implementations consider regions of the form
\begin{equation} \label{eq:region}
    \node{\pset} = \kset{\pv \in \kR^{\pdim}}{\pv_{{\setzero}} = \0, \pv_{{\setone}} \neq \0, \pv_{{\setnone}} \in \kR^{\card{{\setnone}}}}
\end{equation} 
where $\nodeSymb=(\setzero, \setone, \setnone)$ is a partition of $\intervint{1}{\pdim}$, see \cite{mhenni2020sparse,hazimeh2022sparse}.
Letting 
\begin{equation}
    \stepcounter{equation}
    \tag{$\node{\ppb}$}
    \label{prob:node-prob}
    \textstyle
    \node{\pobj} 
    = \inf_{\pv \in \kR^{\pdim}} \lossfunc(\dic\pv) + \regfunc(\pv) + \icvx(\pv \in \node{\pset})
    ,
\end{equation}
one can therefore deduce that no minimizer of \eqref{prob:prob} is contained in region $\node{\pset}$ when the inequality
\begin{equation}
    \label{eq:ideal-pruning-test}
    \node{\pobj} > \opt{\pobj}
\end{equation}
is verified.
Unfortunately, condition \eqref{eq:ideal-pruning-test} is of little interest in practice since evaluating $\node{\pobj}$ and $\opt{\pobj}$ is an NP-hard task. 
A workaround to this issue consists in relaxing \eqref{eq:ideal-pruning-test} as
\begin{equation}
    \label{eq:pruning-test}
    \node{\LB{\pobj}} > \UB{\pobj}
\end{equation}
where $\node{\LB{\pobj}}$ and $\UB{\pobj}$ are some \emph{tractable} lower and upper bounds on $\node{\pobj}$ and $\opt{\pobj}$, respectively. 
Inequality \eqref{eq:pruning-test} is often referred to as ``pruning test'' since if it is verified, $\node{\pset}$ does provably not contain any minimizer of \eqref{prob:prob} and can therefore be safely pruned from the optimization problem.

\footnotetext{
    From the point of view of pruning efficiency, solving~\eqref{prob:relax-node} at the root node \(\nodeSymb_0\) is unnecessary as the pruning test~\eqref{eq:pruning-test} is never passed.
    It is nevertheless common practice to solve it as a wide range of branching rules used in practice leverage its solutions, (see \textit{e.g.}, \Cref{sec:supp_numerics:implementation_choices}).
}

\subsection{Standard Bounding Strategy}
\label{sec:bnb:bounding}

The standard strategy to construct the bounds involved in \eqref{eq:pruning-test} is as follows.
First, an upper bound $\UB{\pobj}$ on $\opt{\pobj}$ can be computed by evaluating the objective function of \eqref{prob:prob} at any feasible point.\footnote{Many methods allow to construct relevant candidates at a reasonable cost, see \eg, \cite{wolsey1980heuristic}.}
Second, the computation of $\node{\LB{\pobj}}$ is generally done by minimizing some convex lower bound on the objective function of \eqref{prob:node-prob}. 
More specifically, a standard choice consists in replacing the term
\begin{equation}
    \label{eq:node-regfunc}
    \node{\regfunc}(\pv) = \regfunc(\pv) + \icvx(\pv \in \node{\pset})
\end{equation}
in problem \eqref{prob:node-prob} by its convex biconjugate denoted $\biconj{(\node{\regfunc})}(\cdot)$ in this paper, see Item~(i) of Proposition 13.16 in~\cite{bauschke2017convex}.
Hence, a valid choice for the lower bound in pruning test \eqref{eq:pruning-test} reads
\begin{equation}
    \label{eq:std-lb}
    \node{\LB{\pobj}} = \node{\robj}  
\end{equation}
where 
\begin{equation} \label{prob:relax-node}
    \stepcounter{equation}
    \tag{$\node{\rpb}$}
    \node{\robj} = \textstyle\inf_{\pv \in \kR^{\pdim}} \lossfunc(\dic\pv) + 
    \biconj{(\node{\regfunc})}(\pv).
\end{equation}
Problem \eqref{prob:relax-node} is called a ``relaxation'' of \eqref{prob:node-prob} and is usually addressed by first-order convex optimization methods~\cite{beck2017first}.
The complexity of these algorithms typically scales as $\bigO(\ddim\pdim \kappa)$ where $\kappa$ denotes the number of iterations performed by the numerical procedure. 

\subsection{Feasible Space Exploration}
\label{sec:bnb:exploration}

In \gls{bnb} procedures, the partitioning of the feasible set into regions can be identified with the expansion of a decision tree where each node corresponds to some region \(\node{\pset}\) defined as in~\eqref{eq:region}. 
As illustrated in \Cref{fig:bnb}, the exploration starts at the root node  \(\nodeSymb_0 = (\emptyset,\emptyset,\intervint{1}{\pdim})\) which corresponds to $\pset^{\nodeSymb_0}=\kR^\pdim$. 
For each leaf node $\nodeSymb=(\setzero,\setone,\setnone)$ examined by the \gls{bnb} procedure, problem \eqref{prob:relax-node} is solved and pruning test \eqref{eq:pruning-test} is evaluated using lower bound~\eqref{eq:std-lb}.
If the test is passed, the corresponding region $\pset^{\nodeSymb}$ is pruned from the problem and the exploration of the tree is stopped below this node.
If the pruning test is not passed, $\pset^{\nodeSymb}$ is partitioned into two new regions 
as follows. 
An index \(\idxentry\in\setnone\) is selected and the following two child nodes of $\nodeSymb$ are created:
\begin{subequations}
    \begin{align}
        \label{eq:direct-zero}
        \succnode{0}{\idxentry}&\triangleq ({\setzero}\cup\{\idxentry\},{\setone},{\setnone}\backslash\{\idxentry\})\\
        \label{eq:direct-one}
        \succnode{1}{\idxentry}&\triangleq({\setzero},{\setone}\cup\{\idxentry\},{\setnone}\backslash\{\idxentry\})
        .
    \end{align}
\end{subequations}
This process is repeated until all the leaf nodes of the tree are such that $\setnone=\emptyset$.
In the latter case,~\eqref{prob:node-prob} reduces to a convex optimization problem which can be commonly solved to machine precision.

\subsection{Pruning Effectiveness and Method Complexity}
\label{sec:bnb:complexity}

The computational burden associated to some node $\nodeSymb$ is generally dominated by the resolution of \eqref{prob:relax-node}. 
Since such a relaxation has to be solved at each node of the decision tree, the overall complexity of \gls{bnb} procedures typically roughly scales linearly with the total number of nodes explored by the algorithm.
On the one hand, we note that passing pruning test \eqref{eq:pruning-test} often requires to compute \emph{tight} lower bounds on \(\node{\pobj}\), in the sense that \(\node{\pobj}- \node{\robj}\) should be small.
On the other hand, achieving the prescribed tightness usually imposes exploring \emph{deep} nodes in the decision tree.\footnote{This claim is discussed more precisely in \Cref{sec:proofs:complexity}.}
As a consequence, the number of nodes explored by \gls{bnb} procedures may become prohibitively large and thus lead to impractical solving times.  
In the next section, we present a strategy to alleviate this computational bottleneck.


\section{A New Pruning Strategy}
\label{sec:screening}

In this section, we present our proposed new strategy to implement pruning test \eqref{eq:pruning-test}. 
Our exposition is organized as follows.
In \Cref{sec:screening:bound}, we first discuss another lower bound on \(\node{\pobj}\) and show that the latter can be evaluated at low cost for a \emph{set} of nodes in \Cref{sec:screening:complexity}.~Using this observation, we explain in \Cref{sec:screening:impact} how the \gls{bnb} decision tree can be expanded when several pruning tests exploiting this lower bound are passed simultaneously.~Finally, in \Cref{sec:screening:implementation} we describe how to embed the proposed methodology in standard \gls{bnb} implementations at virtually no cost.

\subsection{Exploiting Duality to Construct Lower Bounds}
\label{sec:screening:bound}

We propose to construct a lower bound on $\node{\pobj}$ that does not require to solve \eqref{prob:relax-node}.~To this end, we follow another line of work in the \gls{bnb} literature~\citep{sarin1988surrogate} and leverage the Fenchel-Rockafellar dual problem \cite{rockafellar1967duality} associated to~\eqref{prob:relax-node}. 
Under hypotheses \ref{hyp:lossfunc}-\ref{hyp:pertfunc}, the latter is given by\footnote{
    Here, we use the fact that \(\conj{(\biconj{(\node{\regfunc})})}(\cdot) = \conj{(\node{\regfunc})}(\cdot)\) under~\ref{hyp:pertfunc} by Proposition 13.16.
}
\begin{equation}
    \stepcounter{equation}
    \tag{$\node{\mathscr{D}}$}
    \label{prob:dual-node}
    \textstyle
    \node{\dobj} = \sup_{\dv \in \kR^{\ddim}} \underbrace{-\conj{\lossfunc}(-\dv) - \conj{(\node{\regfunc})}(\transp{\dic}\dv)}_{\triangleq~\node{\dfunc}(\dv)}
\end{equation}
and verifies the inequality
\begin{equation}
    \label{eq:bound-ordering}
    \node{\dobj} \leq \node{\robj},
\end{equation}
which is tight under mild assumptions, see \eg, Proposition~15.22 in \cite{bauschke2017convex}.
Hence, setting
\begin{equation}
    \label{eq:dual-bound}
    \node{\LB{\pobj}} = \node{\dfunc}(\dv)
\end{equation}
leads to a valid lower bound to implement~\eqref{eq:pruning-test} for any \(\dv \in \kR^{\ddim}\).
Regarding inequality \eqref{eq:bound-ordering}, we note that lower bound~\eqref{eq:dual-bound} may not be as tight as the standard one given in~\eqref{eq:std-lb}. 
Nevertheless, we show in the sequel that \eqref{eq:dual-bound} can be evaluated for several ``successors'' of node $\nodeSymb$ at virtually no cost.

Let us first precise the notion of ``successors'' of a node \(\nodeSymb\): 
\begin{definition}
    \label{def:successor}
    The node $\nodeSymb'=(\setzero',\setone',\setnone')$ is said to be a successor of $\nodeSymb=(\setzero,\setone,\setnone)$ if it verifies
    \begin{equation}
        \label{eq:successor}
        \setzero \subseteq \setzero'
        \quad\text{and}\quad
        \setone \subseteq \setone'
        .
    \end{equation}
    Moreover, $\nodeSymb'$ is said to be a direct successor of $\nodeSymb$ if it fulfills property~\eqref{eq:successor} and verifies
    \begin{equation}
        \label{eq:direct-successor}
        (\setzero' \setminus \setzero) \cup (\setone' \setminus \setone) = \{\idxentry\}
    \end{equation}
    for some \(\idxentry \in \setnone\).
\end{definition}
From a \gls{bnb} tree perspective, direct successors correspond to the nodes \(\succnode{0}{\idxentry}\) and \(\succnode{1}{\idxentry}\) described in~\eqref{eq:direct-zero}-\eqref{eq:direct-one} for some \(\idxentry\in\setnone\).

Interestingly, the objective functions of the dual problems~\eqref{prob:dual-node} at some node $\nodeSymb$ and any of its successors share a similar mathematical structure.
To reveal this link, we first establish the following result in \Cref{sec:proof:dual-regfunc}:  
\begin{proposition}
    \label{prop:dual-relaxregfunc}
    Let $\nodeSymb=(\setzero,\setone,\setnone)$ be a node.
    Under \ref{hyp:pertfunc}-\ref{hyp:zero-minimized}, the function 
    $\conj{(\node{\regfunc})}(\cdot)$ is separable and defined coordinate-wise for all $\vvi{} \in \kR$ as
    \begin{equation}
        \label{eq:dual-relaxregfunc}
        \conj{(\separable{\node{\regfunc}}{\idxentry})}(\vvi{}) = 
        \begin{cases}
            0 &\text{if} \ \idxentry \in \setzero \\
            \conj{\pertfunc}(\vvi{}) - \reg &\text{if} \ \idxentry \in \setone \\
            \pospart{\conj{\pertfunc}(\vvi{}) - \reg} &\text{if} \ \idxentry \in \setnone.
        \end{cases} 
    \end{equation}
\end{proposition}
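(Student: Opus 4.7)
The plan is to exploit the fact that \(\node{\regfunc}\) decouples coordinate-wise. Indeed, \(\reg\norm{\cdot}{0}\) and \(\pertfunc(\cdot)\) act entrywise, and the region \(\node{\pset}\) in~\eqref{eq:region} is itself defined by coordinate-wise constraints, so one may write \(\node{\regfunc}(\pv) = \sum_\idxentry \separable{\node{\regfunc}}{\idxentry}(\pvi{\idxentry})\) for an explicit scalar component \(\separable{\node{\regfunc}}{\idxentry}\). Since the Fenchel conjugate of a separable sum is itself the separable sum of the conjugates, it suffices to evaluate \(\conj{(\separable{\node{\regfunc}}{\idxentry})}(\cdot)\) in each of the three cases of~\eqref{eq:dual-relaxregfunc}.

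Using~\ref{hyp:zero-minimized} to cancel the term \(\pertfunc(0)=0\), the scalar components reduce to: \(\icvx(\pvi{}=0)\) when \(\idxentry\in\setzero\); \(\reg + \pertfunc(\pvi{}) + \icvx(\pvi{}\neq 0)\) when \(\idxentry\in\setone\); and the function taking value \(0\) at \(\pvi{}=0\) and \(\reg + \pertfunc(\pvi{})\) elsewhere when \(\idxentry\in\setnone\). The first case trivially yields conjugate \(0\). For the third case, I would split the defining supremum into contributions from \(\pvi{}=0\) (giving \(0\)) and from \(\pvi{}\neq 0\) (giving the same value as the second case); the maximum of these two contributions is exactly the positive part \(\pospart{\conj{\pertfunc}(\vvi{}) - \reg}\).

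The substantive step---and the only place where~\ref{hyp:0-in-intdom} enters---is to establish, for \(\idxentry\in\setone\), that \(\sup_{\pvi{}\neq 0}\{\vvi{}\pvi{} - \pertfunc(\pvi{})\} = \conj{\pertfunc}(\vvi{})\); that is, removing the single point \(\pvi{}=0\) from the feasible set of the unrestricted supremum does not change its value. The inequality ``\(\leq\)'' is trivial. For ``\(\geq\)'', the only delicate case is when the unrestricted supremum is attained exclusively at \(\pvi{}=0\), so equals \(-\pertfunc(0)=0\); here I would invoke~\ref{hyp:0-in-intdom} to pick some \(\pvi{}^\star\in\dom(\pertfunc)\setminus\{0\}\) and study the sequence \(\pvi{}^{(k)} = \pvi{}^\star/k\). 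Convexity (\ref{hyp:pertfunc}) yields \(\pertfunc(\pvi{}^{(k)}) \leq \tfrac{1}{k}\pertfunc(\pvi{}^\star)\), which tends to \(0\); combined with the lower-semicontinuity bound \(\liminf_k \pertfunc(\pvi{}^{(k)}) \geq \pertfunc(0) = 0\) from~\ref{hyp:pertfunc}, this forces \(\pertfunc(\pvi{}^{(k)}) \to 0\) and hence \(\vvi{}\pvi{}^{(k)} - \pertfunc(\pvi{}^{(k)}) \to 0\), providing the required lower bound on the restricted supremum. The non-isolation of \(0\) in \(\dom(\pertfunc)\) granted by~\ref{hyp:0-in-intdom} is precisely what makes this limiting argument go through, and handling this subtlety is the main obstacle in the proof.
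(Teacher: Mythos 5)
Your proof is correct, and its skeleton matches the paper's: coordinate-wise separability of $\node{\regfunc}(\cdot)$, the same three-case evaluation, the same splitting into $\max\{0,\conj{\pertfunc}(\vvi{})-\reg\}$ for $\idxentry\in\setnone$, and the same identification of the crux --- showing that deleting the point $\pvi{}=0$ from the supremum defining $\conj{\pertfunc}(\vvi{})$ does not lower its value, the one place where \ref{hyp:0-in-intdom} intervenes. Where you genuinely diverge is in how this crux is established. The paper isolates it as a standalone result (\Cref{lemma:infima=}): for any proper closed convex $\prooffun$ and any accumulation point $\pvi{0}\in\dom(\prooffun)$ of its domain, $\inf_{\pvi{}\in\kR\setminus\{\pvi{0}\}}\prooffun(\pvi{})=\inf_{\pvi{}\in\kR}\prooffun(\pvi{})$, proved by a two-case analysis invoking the continuity of closed convex univariate functions on their domain (Th.~2.22 of \cite{beck2017first}) when the infimum equals $\prooffun(\pvi{0})$, and a minimizing-subsequence extraction otherwise. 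You instead exploit the specific structure granted by \ref{hyp:zero-minimized}: sliding along the segment from a fixed $\pvi{}^\star\in\dom(\pertfunc)\setminus\{0\}$ toward $0$, convexity gives $\pertfunc(\pvi{}^\star/k)\leq\pertfunc(\pvi{}^\star)/k$, whence $\vvi{}\pvi{}^{(k)}-\pertfunc(\pvi{}^{(k)})\geq(\vvi{}\pvi{}^\star-\pertfunc(\pvi{}^\star))/k\to 0=\vvi{}\cdot 0-\pertfunc(0)$. Your route is shorter, self-contained, and in fact does not need closedness at all: the lower-semicontinuity bound you invoke is superfluous, since the one-sided convexity bound above already shows that the restricted supremum dominates the contribution of the point $0$ --- which also lets you drop the case distinction on whether the unrestricted supremum is ``attained exclusively at $0$'' (the unconditional inequality $\sup_{\pvi{}\neq 0}\{\vvi{}\pvi{}-\pertfunc(\pvi{})\}\geq -\pertfunc(0)$ suffices). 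It moreover makes transparent that, under \ref{hyp:pertfunc} and \ref{hyp:zero-minimized}, hypothesis \ref{hyp:0-in-intdom} amounts exactly to $\dom(\pertfunc)\neq\{0\}$, since a convex subset of $\kR$ containing $0$ and any other point automatically accumulates at $0$. What the paper's lemma buys in exchange is reusability and generality --- it is valid at an arbitrary accumulation point of the domain, with no role played by the function vanishing there or by the segment structure --- at the price of leaning on closedness through the continuity theorem.
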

As a consequence of \Cref{prop:dual-relaxregfunc}, we prove in \Cref{sec:proof:dual-link} that the objective function of the dual problem \eqref{prob:dual-node} at some node $\nodeSymb$ verifies a notable relation with that of its successors.
More precisely, letting
\begin{subequations}
    \begin{align}
        \label{eq:pivot-zero}
        \pivot{0}(\vvi{}) &\triangleq \pospart{\conj{\pertfunc}(\vvi{}) - \reg} \\
        \label{eq:pivot-one}
        \pivot{1}(\vvi{}) &\triangleq \pospart{\reg - \conj{\pertfunc}(\vvi{})}
    \end{align}
\end{subequations}
for all \(\vvi{} \in \kR\), we obtain the following property: 
\begin{proposition}
    \label{prop:dual-link}
    Let $\nodeSymb' = (\setzero',\setone',\setnone')$ be a successor of $\nodeSymb = (\setzero,\setone,\setnone)$. 
    Under~\ref{hyp:lossfunc}-\ref{hyp:zero-minimized}, we have for all \(\dv \in \kR^{\ddim}\):
    \begin{equation} \label{eq:dual-function}
        \begin{split}
            \dfunc^{\nodeSymb'}(\dv) = \node{\dfunc}(\dv) &+ \textstyle\sum_{\idxentry \in \setzero'\setminus\setzero} \pivot{0}(\transp{\atom{\idxentry}}\dv) \\
            &+ \textstyle\sum_{\idxentry \in \setone'\setminus\setone} \pivot{1}(\transp{\atom{\idxentry}}\dv) 
            .
        \end{split}
    \end{equation}
\end{proposition}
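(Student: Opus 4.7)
The plan is to expand $\dfunc^{\nodeSymb'}(\dv) - \node{\dfunc}(\dv)$ using the explicit form of the conjugate of the node regularizer provided by \Cref{prop:dual-relaxregfunc}, and then to check that the difference reduces termwise to the two advertised sums.

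First I would observe that the term $-\conj{\lossfunc}(-\dv)$ is identical in both $\node{\dfunc}(\dv)$ and $\dfunc^{\nodeSymb'}(\dv)$, so it cancels in the difference. The only surviving contribution is
\begin{equation*}
    \dfunc^{\nodeSymb'}(\dv) - \node{\dfunc}(\dv)
    = \conj{(\node{\regfunc})}(\transp{\dic}\dv) - \conj{(\regfunc^{\nodeSymb'})}(\transp{\dic}\dv).
\end{equation*}
By \Cref{prop:dual-relaxregfunc}, both functions are separable, so the right-hand side decomposes as a sum over the coordinates $\idxentry \in \intervint{1}{\pdim}$ of $\conj{(\separable{\node{\regfunc}}{\idxentry})}(\transp{\atom{\idxentry}}\dv) - \conj{(\separable{\regfunc^{\nodeSymb'}}{\idxentry})}(\transp{\atom{\idxentry}}\dv)$.

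Next I would split the index set $\intervint{1}{\pdim}$ into five disjoint groups according to how $\idxentry$ is classified by $\nodeSymb$ and $\nodeSymb'$:
(a) $\idxentry\in\setzero$ (hence $\idxentry\in\setzero'$),
(b) $\idxentry\in\setone$ (hence $\idxentry\in\setone'$),
(c) $\idxentry\in\setnone\cap\setnone'$,
(d) $\idxentry\in\setzero'\setminus\setzero$, and
(e) $\idxentry\in\setone'\setminus\setone$.
Using the case analysis of \Cref{prop:dual-relaxregfunc}, groups (a), (b) and (c) yield a vanishing contribution since both conjugates agree on these indices. For a coordinate in group (d), the index is in $\setnone$ for $\nodeSymb$ and in $\setzero'$ for $\nodeSymb'$, so the contribution is $\pospart{\conj{\pertfunc}(\transp{\atom{\idxentry}}\dv) - \reg} - 0 = \pivot{0}(\transp{\atom{\idxentry}}\dv)$. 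For a coordinate in group (e), the contribution is $\pospart{\conj{\pertfunc}(\transp{\atom{\idxentry}}\dv) - \reg} - (\conj{\pertfunc}(\transp{\atom{\idxentry}}\dv) - \reg)$, which equals $\pivot{1}(\transp{\atom{\idxentry}}\dv)$ by the elementary identity $\pospart{a} - a = \pospart{-a}$.

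Summing the five groups yields exactly~\eqref{eq:dual-function}, which concludes the proof. There is no real obstacle here: the argument is bookkeeping once \Cref{prop:dual-relaxregfunc} is available. The only delicate move is the rewriting $\pospart{a} - a = \pospart{-a}$ that turns the group (e) contribution into the compact form $\pivot{1}(\cdot)$; this explains the asymmetric but natural definitions of $\pivot{0}$ and $\pivot{1}$ in~\eqref{eq:pivot-zero}--\eqref{eq:pivot-one}.
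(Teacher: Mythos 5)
Your proof is correct, but it follows a genuinely different route from the paper's. The paper first isolates an auxiliary result (\Cref{lemma:dual-link direct successor}) treating the case of a \emph{direct} successor, by expanding the dual objective via \Cref{prop:dual-relaxregfunc} and manipulating the sums, and then obtains the general identity~\eqref{eq:dual-function} by induction on \(\card{\setnone\setminus\setnone'}\), chaining intermediate nodes between \(\nodeSymb\) and \(\nodeSymb'\). You instead compute \(\dfunc^{\nodeSymb'}(\dv)-\node{\dfunc}(\dv)\) in one shot: the term \(-\conj{\lossfunc}(-\dv)\) cancels, separability from \Cref{prop:dual-relaxregfunc} reduces the difference to a coordinate-wise comparison, and a five-way split of \(\intervint{1}{\pdim}\) does the bookkeeping. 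One small point deserves an explicit line in your write-up: the claim that your five groups partition \(\intervint{1}{\pdim}\) uses the fact that \(\setzero'\setminus\setzero\subseteq\setnone\) and \(\setone'\setminus\setone\subseteq\setnone\), which holds because \(\setone\subseteq\setone'\) (resp.\ \(\setzero\subseteq\setzero'\)) and \(\nodeSymb'\) is itself a partition, so an index of \(\setone\) cannot migrate to \(\setzero'\) and vice versa; you use this implicitly when asserting that a group-(d) or group-(e) index sits in \(\setnone\) for \(\nodeSymb\). Your identity \(\pospart{a}-a=\pospart{-a}\) is exactly the rewriting \(\pvi{}=\pospart{\pvi{}}-\pospart{-\pvi{}}\) that the paper invokes in its case \textit{S-2}. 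As for what each approach buys: yours is shorter, avoids the induction entirely, and makes transparent that \eqref{eq:dual-function} is nothing more than the coordinate-wise difference of the two conjugate expressions in \eqref{eq:dual-relaxregfunc}; the paper's modular route yields \Cref{lemma:dual-link direct successor} as a standalone statement that is reused verbatim in the proof of \Cref{prop:propagation-tests}, though that lemma is just the direct-successor specialization of \Cref{prop:dual-link}, so nothing essential is lost by your argument. One shared caveat, not a gap specific to you: your subtraction \(\dfunc^{\nodeSymb'}(\dv)-\node{\dfunc}(\dv)\) silently assumes finiteness (if \(\conj{\pertfunc}(\transp{\atom{\idxentry}}\dv)=+\infty\) for some \(\idxentry\in\setone\), both dual values equal \(-\infty\) and the difference is ill-defined); the paper's own add-and-subtract manipulations carry the same looseness, so on this point the two proofs are on equal footing.
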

From \Cref{prop:dual-link}, we note that the term $\node{\dfunc}(\dv)$ is common to the objective functions of all the dual problems associated to the successors of $\nodeSymb$.
As a consequence, lower bound~\eqref{eq:dual-bound} can be computed for \emph{several} successors $\nodeSymb'$ of $\nodeSymb$ through a \emph{single} evaluation of this term. 
In the next section, we leverage this observation to \emph{jointly} evaluate lower bound~\eqref{eq:dual-bound} at all the direct successors of \(\nodeSymb\) with a complexity scaling as \(\bigO(\ddim\pdim)\).

\subsection{Evaluation of \texorpdfstring{\eqref{eq:dual-bound}}{(~\ref{eq:dual-bound})} for All the Direct Successors} \label{sec:screening:complexity}

In this section, we consider the scenario where one wants to evaluate lower bound~\eqref{eq:dual-bound} for all the direct successors of some node $\nodeSymb=(\setzero,\setone,\setnone)$. 
In this case, letting \(\nodeSymb'=(\setzero',\setone',\setnone')\) denote a direct successor of \(\nodeSymb\), we have that \eqref{eq:dual-function} simplifies to
\begin{equation} \label{eq:dual-function-direct}
    \hspace*{-3pt}
    \dfunc^{\nodeSymb'}(\dv) = 
    \node{\dfunc}(\dv) +
    \begin{cases}
        \pivot{0}(\transp{\atom{\idxentry}}\dv) & \hspace*{-5pt}\mbox{if \(\setzero' \setminus \setzero=\{\idxentry\}\)} 
        \\
        \pivot{1}(\transp{\atom{\idxentry}}\dv) & \hspace*{-5pt}\mbox{if \(\setone' \setminus \setone=\{\idxentry\}\)}.
    \end{cases}
\end{equation}
We thus remark that the evaluation of~\eqref{eq:dual-bound} for all the direct successors $\nodeSymb'$ of $\nodeSymb$ amounts to computing:
\begin{enumerate}[nosep, label=\textit{\roman*)}]
    \item \label{task:computing inner products}
    the inner products \(\{\transp{\atom{\idxentry}}{\dv}\}_{\idxentry=1}^{\pdim}\),
    \item the quantity $\node{\dfunc}(\dv)$,
    \item the terms $\pivot{0}(\transp{\atom{\idxentry}}\dv)$ and $\pivot{1}(\transp{\atom{\idxentry}}\dv)$ for all $\idxentry \in \setnone$.
\end{enumerate}
The first task can obviously be done with a complexity $\bigO(\ddim\pdim)$. 
Moreover, using the definition of $\node{\dfunc}(\dv)$ in \eqref{prob:dual-node} and \Cref{prop:dual-relaxregfunc}, 
we have that the last two tasks can be implemented with a complexity scaling as \(\bigO(\ddim+\pdim)\) and \(\bigO(\card{\setnone})\), respectively, under hypothesis~\ref{hyp:cost-conj}.\footnote{This is achieved by re-using the computations of task $i)$.}
Overall, the computational burden induced by the evaluation of~\eqref{eq:dual-bound} for \emph{all} the direct successors of node $\nodeSymb$ scales as $\bigO(\ddim\pdim)$.
This is a substantial decrease with respect to the complexity required to perform the same task for standard lower bound \eqref{eq:std-lb}. 
More specifically, applying $\kappa$ iterations of some first-order method to \eqref{prob:relax-node} leads to a complexity of $\mathcal{O}(\ddim\pdim\kappa)$ (see \Cref{sec:bnb:bounding}). Repeating this operation for all $2|\setnone|$ direct successors of $\nodeSymb$ then leads to a degradation of the computation burden by a factor $2|\setnone|\kappa$.  

In fact, we will show in \Cref{sec:screening:implementation} that the complexity \textit{overhead} needed to evaluate lower bound~\eqref{eq:dual-bound} for all the direct successors of $\nodeSymb$ can be further reduced from $\bigO(\ddim\pdim)$ to $\bigO(\ddim+\pdim)$ within a standard \gls{bnb} implementation.

\subsection{Simultaneous Tests: Effect on the Tree Exploration}
\label{sec:screening:impact}

The evaluation at low cost of lower bound~\eqref{eq:dual-bound} for all the direct successors $\nodeSymb'$ of \(\nodeSymb\) opens the way to the practical implementation of \emph{simultaneous} pruning tests for several regions $\pset^{\nodeSymb'}$. 
In this section, we investigate how the \gls{bnb} decision tree can be expanded if pruning test~\eqref{eq:pruning-test} is passed simultaneously for \textit{several} direct successors of some node \(\nodeSymb=(\setzero,\setone,\setnone)\). 

We first note that
\(
\kset{\succnode{0}{\idxentry}}{\idxentry\in\setnone}
\cup
\kset{\succnode{1}{\idxentry}}{\idxentry\in\setnone}
\), where \(\succnode{0}{\idxentry}\) and \(\succnode{1}{\idxentry}\) have been defined in~\eqref{eq:direct-zero}-\eqref{eq:direct-one}, 
corresponds to all the direct successors of \(\nodeSymb\). 
Hence, the two sets 
\begin{subequations}
    \begin{align}
        \node{\setidxzero} &= \kset{\idxentry\in\setnone}{\dfunc^{\succnode{0}{\idxentry}}(\dv) > \UB{\pobj}}\label{def:I0}\\
        \node{\setidxone} &= \kset{\idxentry\in\setnone}{\dfunc^{\succnode{1}{\idxentry}}(\dv) > \UB{\pobj}}\label{def:I1}
    \end{align}
\end{subequations}
characterize the direct successors of $\nodeSymb$ satisfying pruning condition \eqref{eq:pruning-test} when implemented with the proposed lower bound~\eqref{eq:dual-bound} at some dual point $\dv\in\kR^\ddim$.\footnote{
As discussed in \Cref{sec:screening:implementation}, the dual point $\dv$ is constructed from the iterates of the solving procedure addressing \eqref{prob:relax-node}.}
By definition of the sets $\node{\setidxzero}$ and $\node{\setidxone}$, the region
\begin{equation}
    (\cup_{\idxentry\in\node{\setidxzero}} \pset^{\succnode{0}{\idxentry}})
    \cup
    (\cup_{\idxentry\in\node{\setidxone}} \pset^{\succnode{1}{\idxentry}})
\end{equation}
does not contain any minimizer of problem \eqref{prob:prob}.

In the rest of this paragraph, we show how the simultaneous success of several pruning tests can be translated in terms of deployment of the \gls{bnb} decision tree.
To guide our reasoning, we examine two cases and then present a generic procedure to expand the \gls{bnb} decision tree in \Cref{algo:expansion-tree}.

First, suppose that $\node{\setidxzero}\cap\node{\setidxone} \neq \emptyset$, that is there exists some $\idxentry\in{\setnone}$ such that both $\succnode{0}{\idxentry}$ and $\succnode{1}{\idxentry}$ pass the pruning test~\eqref{eq:pruning-test} with lower bound \eqref{eq:dual-bound}.
Since $\node{\pset}=\pset^{\succnode{1}{\idxentry}}\cup \pset^{\succnode{0}{\idxentry}}$, one concludes that no minimizers to problem~\eqref{prob:prob} can be found in region \(\node{\pset}\) which can thus be pruned from the decision tree. \\
Second, suppose that \(\node{\setidxzero}\cap\node{\setidxone} = \emptyset\) but \(\node{\setidxzero}\cup\node{\setidxone} \neq \emptyset\). 
We first examine the case where only one direct successor of \(\nodeSymb\) passes the pruning test~\eqref{eq:pruning-test} with lower bound~\eqref{eq:dual-bound}, \textit{i.e.}, $\node{\setidxzero}\cup\node{\setidxone}=\{\idxentry\}$, say $\node{\setidxzero}=\{\idxentry\}$ and \(\node{\setidxone}=\emptyset\) for instance.
Then, one concludes that region $\pset^{\succnode{0}{\idxentry}}$ does not contain any solution to problem \eqref{prob:prob} and can therefore be pruned from the problem's feasible set without altering its solution. 
From a decision-tree perspective, this information can be taken into account by: \textit{i)} creating two new nodes $\succnode{0}{\idxentry}$ and $\succnode{1}{\idxentry}$ below $\nodeSymb$ and \textit{ii)} immediately pruning the region \(\pset^{\succnode{0}{\idxentry}}\).
If \(\node{\setidxzero}\cup\node{\setidxone}\) contains more than one element, the next proposition suggests that this procedure can be applied recursively.

\begin{proposition}
    \label{prop:propagation-tests}
    Under \ref{hyp:lossfunc}-\ref{hyp:zero-minimized}, we have for all successors \(\nodeSymb'=(\setzero',\setone',\setnone')\) of \(\nodeSymb\) and indices \(\idxentry\in\setnone'\):
    \begin{subequations}
        \begin{align}
            \dfunc^{\succnode{0}{\idxentry}}(\dv) > \UB{\pobj} 
            &\;\implies\; \dfunc^{\nodeSymb'_{0,\idxentry}}(\dv) > \UB{\pobj}\\
            \dfunc^{\succnode{1}{\idxentry}}(\dv) > \UB{\pobj} 
            &\;\implies\; \dfunc^{\nodeSymb'_{1,\idxentry}}(\dv) > \UB{\pobj}
            .
        \end{align}
    \end{subequations}
\end{proposition}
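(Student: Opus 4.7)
The proof should follow directly from Proposition \ref{prop:dual-link} by observing that the pivot functions $\pivot{0}(\cdot)$ and $\pivot{1}(\cdot)$ introduced in~\eqref{eq:pivot-zero}-\eqref{eq:pivot-one} are nonnegative. My plan is to show that $\nodeSymb'_{0,\idxentry}$ is a successor of $\succnode{0}{\idxentry}$ and then to rewrite the gap between their dual objectives as a sum of such nonnegative quantities.

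The first step is a bookkeeping check on the index sets. Since $\nodeSymb'=(\setzero',\setone',\setnone')$ is a successor of $\nodeSymb=(\setzero,\setone,\setnone)$, we have $\setnone'\subseteq\setnone$, so any $\idxentry\in\setnone'$ also lies in $\setnone$ and the node $\succnode{0}{\idxentry}=(\setzero\cup\{\idxentry\},\setone,\setnone\setminus\{\idxentry\})$ is well defined. Writing $\nodeSymb'_{0,\idxentry}=(\setzero'\cup\{\idxentry\},\setone',\setnone'\setminus\{\idxentry\})$, I would verify from $\setzero\subseteq\setzero'$ and $\setone\subseteq\setone'$ that $\setzero\cup\{\idxentry\}\subseteq\setzero'\cup\{\idxentry\}$ and $\setone\subseteq\setone'$, which is exactly Definition \ref{def:successor}. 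The same reasoning applies to $\nodeSymb'_{1,\idxentry}$ and $\succnode{1}{\idxentry}$.

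I would then invoke Proposition \ref{prop:dual-link} with $\succnode{0}{\idxentry}$ as the base node and $\nodeSymb'_{0,\idxentry}$ as its successor. The differences of index sets simplify to $(\setzero'\cup\{\idxentry\})\setminus(\setzero\cup\{\idxentry\})=\setzero'\setminus\setzero$ and $\setone'\setminus\setone$, so identity~\eqref{eq:dual-function} yields
\begin{equation*}
    \dfunc^{\nodeSymb'_{0,\idxentry}}(\dv) = \dfunc^{\succnode{0}{\idxentry}}(\dv) + \textstyle\sum_{\idxentry'\in\setzero'\setminus\setzero}\pivot{0}(\transp{\atom{\idxentry'}}\dv) + \sum_{\idxentry'\in\setone'\setminus\setone}\pivot{1}(\transp{\atom{\idxentry'}}\dv).
\end{equation*}
The two sums on the right-hand side are nonnegative since $\pivot{0}(\cdot)$ and $\pivot{1}(\cdot)$ are defined through the positive-part operator, hence $\dfunc^{\nodeSymb'_{0,\idxentry}}(\dv) \geq \dfunc^{\succnode{0}{\idxentry}}(\dv)$, and the first implication follows. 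The second implication is obtained identically by applying Proposition \ref{prop:dual-link} to the pair $(\succnode{1}{\idxentry},\nodeSymb'_{1,\idxentry})$.

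There is no real obstacle here: the whole argument boils down to the qualitative observation that, via Proposition \ref{prop:dual-link}, the dual value is monotone along the successor relation since each additional branching decision contributes a nonnegative pivot term. The only care needed is the index-set bookkeeping to ensure that $\nodeSymb'_{0,\idxentry}$ and $\nodeSymb'_{1,\idxentry}$ are legitimate successors of $\succnode{0}{\idxentry}$ and $\succnode{1}{\idxentry}$, so that Proposition \ref{prop:dual-link} can be applied.
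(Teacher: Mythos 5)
Your proposal is correct and rests on the same mechanism as the paper's proof: both reduce the claim to the inequalities $\dfunc^{\nodeSymb'_{0,\idxentry}}(\dv)\geq\dfunc^{\succnode{0}{\idxentry}}(\dv)$ and $\dfunc^{\nodeSymb'_{1,\idxentry}}(\dv)\geq\dfunc^{\succnode{1}{\idxentry}}(\dv)$, obtained from \Cref{prop:dual-link} together with the nonnegativity of the pivot functions \eqref{eq:pivot-zero}--\eqref{eq:pivot-one}. The only cosmetic difference is that you verify the successor relation between $\succnode{0}{\idxentry}$ and $\nodeSymb'_{0,\idxentry}$ (using $\idxentry\in\setnone'$, hence $\idxentry\notin\setzero'\cup\setone'$) and apply \Cref{prop:dual-link} once to that pair, whereas the paper expands $\dfunc^{\nodeSymb'_{0,\idxentry}}(\dv)$ and $\dfunc^{\succnode{0}{\idxentry}}(\dv)$ around their respective parents $\nodeSymb'$ and $\nodeSymb$ and cancels the common term $\pivot{0}(\transp{\atom{\idxentry}}\dv)$; both are one-step consequences of the same monotonicity of the dual objective along the successor relation.
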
    

A proof of this result can be found in \Cref{sec:proof:propagation-tests}. 
\Cref{prop:propagation-tests} states that if pruning test~\eqref{eq:pruning-test} is passed using lower bound~\eqref{eq:dual-bound}
for some direct successor of node $\nodeSymb$, the result of the test can be propagated to any successor $\nodeSymb'$ of $\nodeSymb$ compliant with the condition $\idxentry\in\setnone'$.  
This observation leads to \Cref{algo:expansion-tree} which describes how the \gls{bnb} decision tree can be expanded upon the knowledge of $\node{\setidxzero}$ and $\node{\setidxone}$.
\Cref{fig:bnb-scr} illustrates the output of \Cref{algo:expansion-tree} when $\nodeSymb=\nodeSymb_0$, $\setidxzero^{\nodeSymb_0}=\emptyset$ and $\setidxone^{\nodeSymb_0}=\{\idxentry_0,\idxentry_1\}$.
In comparison to \Cref{fig:bnb}, we observe that the proposed pruning procedure does not require to solve the relaxations at the nodes $\nodeSymb_1$, $\nodeSymb_2$ and $\nodeSymb_4$, although ultimately leading to the same expanded tree.

\begin{algorithm}[t]
    \small 
    \SetKwInOut{Input}{input}
    \Input{node $\nodeSymb$, sets $\node{\setidxzero}$, $\node{\setidxone}$ defined in \eqref{def:I0}-\eqref{def:I1}}\vspace{5pt}

    \uIf{$\node{\setidxzero}\cap \node{\setidxone}\neq\emptyset$}{Prune \(\pset^{\nodeSymb}\) from the \gls{bnb} tree}
    \Else{
        Set $\nodeSymb' \leftarrow \nodeSymb$\;

        \ForAll{$\idxentry \in \node{\setidxzero} \cup \setidxone$}{
            Create the two direct successors $\nodeSymb'_{0,\idxentry}$ and $\nodeSymb'_{1,\idxentry}$ to $\nodeSymb'$\;

            \uIf{$i\in\node{\setidxzero}$}{
                Prune \(\pset^{\nodeSymb'_{0,\idxentry}}\) and set $\nodeSymb' \leftarrow \nodeSymb'_{1,\idxentry}$\;
            }\ElseIf{$i\in\node{\setidxone}$}{
                Prune \(\pset^{\nodeSymb'_{1,\idxentry}}\) and set $\nodeSymb' \leftarrow \nodeSymb'_{0,\idxentry}$\;
            }
        }
    }
    \caption{\small Tree expansion based on simultaneous pruning \label{algo:expansion-tree}}
\end{algorithm}

\subsection{Implementation in Branch-and-Bound Methods} \label{sec:screening:implementation}

In this final section, we discuss how the proposed pruning strategy, leveraging lower bound~\eqref{eq:dual-bound}, can be efficiently integrated into standard \gls{bnb} implementations. 

We consider the following strategy.  
Given some node $\nodeSymb$ processed by the \gls{bnb} procedure, we test simultaneously all the direct successors of $\nodeSymb$ at each iteration of the solving process of \eqref{prob:relax-node}.
More specifically, letting $\hat{\pv} \in \kR^{\pdim}$ be the current iterate constructed by the first-order method addressing~\eqref{prob:relax-node}, we evaluate the lower bound~\eqref{eq:dual-bound} with some dual point verifying
\begin{equation} \label{eq:dual-point}
    \dv \in -\subdiff\lossfunc(\dic\hat{\pv}) 
\end{equation}
and construct the sets $\node{\setidxzero}$ and $\node{\setidxone}$ according to \eqref{def:I0}-\eqref{def:I1}. 
We defer the discussion on our motivations for choosing \(\dv\) as in~\eqref{eq:dual-point} to the next paragraph. 
We stop (prematurely) the resolution of~\eqref{prob:relax-node} as soon as $\node{\setidxzero}\cup\node{\setidxone}\neq\emptyset$ for some $\dv \in \kR^{\ddim}$ and expand the decision tree as described in \Cref{algo:expansion-tree}. 
The \gls{bnb} algorithm is then continued and this process is repeated.
If none of the pruning tests are passed during the resolution of \eqref{prob:relax-node}, the standard pruning test using lower bound \eqref{eq:std-lb} is applied and the decision tree is expanded according to standard \gls{bnb} operations as described in \Cref{sec:bnb}.

We devote the rest of the section to motivate the choice of dual point \(\dv \in \kR^{\ddim}\) described in~\eqref{eq:dual-point}.
From an effectiveness point of view, our rationale is to (try to) maximize the first term in~\eqref{eq:dual-function-direct}. 
If strong duality holds between \eqref{prob:relax-node} and \eqref{prob:dual-node}, by virtue of Theorem 19.1 from \citep{bauschke2017convex}, this can be achieved by choosing \(\dv \in - \partial \lossfunc(\dic\opt{\pv})\) where \(\opt{\pv}\) denotes any minimizer of~\eqref{prob:relax-node}. 
Since such a minimizer is not available, we use the current iterate (denoted \(\hat{\pv}\) in \eqref{eq:dual-point}) of the numerical procedure solving~\eqref{prob:relax-node} as a surrogate.

From a complexity point of view, this proposed pruning methodology can be integrated within standard \gls{bnb} implementations at virtually no cost. 
Indeed, we notice that a dual point $\dv \in \kR^{\ddim}$ verifying~\eqref{eq:dual-point} and the corresponding vector $\transp{\dic}\dv \in \kR^{\pdim}$ are already computed during the iterations of many standard first-order methods tailored to solve~\eqref{prob:relax-node} such as proximal gradient, coordinate descent or ADMM~\cite{beck2017first}.
According to our discussion in \Cref{sec:screening:bound}, the complexity overhead required to compute lower bound~\eqref{eq:dual-bound} associated to \emph{all} the direct successors of $\nodeSymb$ then drops from $\bigO(\ddim\pdim)$ to $\mathcal{O}(\ddim+\pdim)$ since the inner products $\{\ktranspose{\atom{}}_i{\dv}\}_{i=1}^{\pdim}$ involved in task \ref{task:computing inner products} are already available. 
We note that this additional computational burden is negligible as compared to the complexity of standard first-order methods which typically scales as $\mathcal{O}(\ddim\pdim)$ per iteration.

\begin{figure}[t]
    \centering

\begin{tikzpicture}
    \begin{scope}[xshift=-4cm]
        \node at (0,0) (node0) {};
        \draw[
            ultra thick,
            top color = white,
            bottom color = blue!30,
        ] (node0) circle (10pt) node {$\nu_0$};
        \node at ($(node0.south) + (0,-0.5)$) {{\small{Solve $(\rpb^{\nodeSymb_0})$}}};
    \end{scope}
    
    \begin{scope}[xshift=1cm]
        \node at (0,0) (node0) {};
        \draw[->,dashed,ultra thick] ($(node0)+(-4,0)$) -- ($(node0)+(-1,0)$) node[midway,font=\scriptsize,align=left] {$\setidxzero^{\nodeSymb_0}=\emptyset$ \\~\\ $\setidxone^{\nodeSymb_0}=\{\idxentry_0,\idxentry_1\}$};
        \draw[
            ultra thick,
            top color = white,
            bottom color = blue!30,
        ] (node0) circle (10pt) node {$\nu_0$};
        \node at ($(node0)+(-1.3,-1.4)$) (node1) {};
        \draw[
            ultra thick,
            top color = white,
            bottom color = blue!30,
        ] (node1) circle (10pt) node {$\nu_1$};
        \draw[ultra thick,->] ($(node0.south west)+(-0.2,0)$) -- ($(node1.north)+(0.2,0.2)$) node[midway,fill=white,draw,font=\scriptsize,inner sep=2] {$\idxentry_0 \rightarrow \setzero$};
        \node at ($(node0)+(1.3,-1.4)$) (node2) {};
        \draw[
            ultra thick,
            top color = white,
            bottom color = red!30,
        ] (node2) circle (10pt) node {$\nu_2$};
        \draw[ultra thick,->] ($(node0.south east)+(0.2,0)$) -- ($(node2.north)+(-0.2,0.2)$);
        \node at ($(node2.south) + (0,-0.5)$) {\small$(\rpb^{\nodeSymb_2})$ not solved};
        \node at ($(node1)+(-1.2,-1.4)$) (node3) {};
        \draw[
            ultra thick,
            top color = white,
            bottom color = blue!30,
        ] (node3) circle (10pt) node {$\nu_3$};
        \draw[ultra thick,->] ($(node1.south west)+(-0.2,0)$) -- ($(node3.north)+(0.2,0.2)$) node[midway,fill=white,draw,font=\scriptsize,inner sep=2] {$\idxentry_1 \rightarrow \setzero$};
        \node at ($(node3.south) + (0,-0.5)$) {};
        \node at ($(node1.west) + (-1.5,0)$) {\small$(\rpb^{\nodeSymb_1})$ not solved};
        \node at ($(node1)+(1.2,-1.4)$) (node4) {};
        \draw[
            ultra thick,
            top color = white,
            bottom color = red!30,
        ] (node4) circle (10pt) node {$\nu_4$};
        \draw[ultra thick,->] ($(node1.south east)+(0.2,0)$) -- ($(node4.north)+(-0.2,0.2)$);
        \node at ($(node4.south) + (0,-0.5)$) {\small$(\rpb^{\nodeSymb_4})$ not solved};
    \end{scope}
\end{tikzpicture}
    \caption{
        Impact of simultaneous pruning tests on the \gls{bnb} tree exploration. Output of \Cref{algo:expansion-tree} when applied with $\nodeSymb=\nodeSymb_0$, $\setidxzero^{\nodeSymb_0}=\emptyset$ and $\setidxone^{\nodeSymb_0}=\{\idxentry_0,\idxentry_1\}$.
    }
    \label{fig:bnb-scr}
\end{figure}


\section{Numerical Experiments}
\label{sec:numerics}

\newcommand{\elops}{\texttt{\texttt{El0ps}}}

In this final section, we assess numerically the proposed pruning strategy to accelerate \gls{bnb} algorithms addressing problem \eqref{prob:prob}.
We do not focus on the statistical characterization of the solutions but refer to \cite{bertsimas2020sparsereg,hastie2020best} for a thorough discussion on this topic.

\paragraph{Reproducibility}
The research presented in this paper is reproducible. 
The associated code is open-sourced\footnote{\url{https://github.com/TheoGuyard/El0ps}} and all the datasets used in our simulations are publicly available. 
Computations were carried out using the Grid'5000 testbed, supported by a scientific interest group hosted by INRIA and including CNRS, RENATER and several universities as well as other organizations.\footnote{\url{https://www.grid5000.fr}}
Experiments were run on a Debian 10 operating system, featuring one Intel Xeon E5-2660 v3 CPU clocked at 2.60 GHz with 16 GB of RAM.

\paragraph{Solver specifications}
In our comparisons, we consider different methods solving problem \eqref{prob:prob} exactly, that is returning the value of (at least) one minimizer to machine precision.
First, we use \texttt{Mosek}, \texttt{Cplex} and \texttt{Gurobi} which are off-the-shelf \gls{mip} solvers \cite{anand2017comparative}.
Second, we consider the \texttt{L0bnb} solver \cite{hazimeh2022sparse} which is dedicated to some specific\footnote{Namely with a quadratic function $\lossfunc(\cdot)$ and a term $\pertfunc(\cdot)$ corresponding to an $\ell_2$-norm and/or a bound constraint.} instances of problem~\eqref{prob:node-prob}.
We compare these procedures to a standard \gls{bnb} implementation enhanced with the simultaneous pruning tests described in this paper, noted \elops{}.

For the sake of reproducibility, the \gls{mip} formulations of the problem considered are specified in \Cref{sec:supp_numerics:mip}.
\Cref{sec:supp_numerics:implementation_choices} details our \gls{bnb} implementation choices and \Cref{sec:supp_numerics:implementation} gives the expression of the function $\biconj{(\node{\regfunc})}(\cdot)$ involved in the relaxations \eqref{prob:relax-node} for the instances of problem \eqref{prob:prob} considered in this section.
Finally, the tuning procedure for $\reg$ and the hyperparameters involved in $\pertfunc(\cdot)$ is described in \Cref{sec:supp_numerics:hyperparameters}.

\subsection{Performance on Synthetic Data}
\label{sec:numerics:synthetic}

In this section, we analyze the performance of different solvers on synthetic data. 
We consider instances of problem \eqref{prob:prob} defined by
\begin{subequations}
    \begin{align}
        \label{eq:leastsquares}
        \lossfunc(\cdot) &= \tfrac{1}{2}\norm{\obs-\cdot}{2}^2 \\
        \label{eq:bigm}
        \pertfunc(\cdot) &= \icvx(\abs{\cdot} \leq \bigM)
    \end{align}
\end{subequations}
for some $\obs \in \kR^{\ddim}$ and $\bigM > 0$.
This choice is motivated by various applications, see \eg, \cite{tillmann2021cardinality,bertsimas2021unified,bertsimas2023compressed}.
Our results are averaged over 100 instances independently generated.

\paragraph{Instance generation}

For each problem instance, we generate the rows of $\dic \in \kR^{\ddim\times\pdim}$ as independent realizations of a multivariate normal distribution with zero mean and covariance matrix $\corrmat \in \kR^{\pdim\times \pdim}$ where each entry $(i,j)$ is defined as $\corrmatel{ij}=\corrparam^{\abs{i-j}}$ for some $\corrparam \in [0,1)$. 
Moreover, we set $\obs = \dic\groundtruth + \noise$ where $\groundtruth \in \kR^{\pdim}$ has $\sparsitylevel$ evenly-spaced non-zero entries of unit amplitude in absolute value and where $\noise \in \kR^{\ddim}$ is a zero-mean Gaussian noise with a variance tuned to obtain some signal-to-noise ratio $\snr = 10 \log_{10}(\norm{\dic\groundtruth}{2}^2/\norm{\noise}{2}^2)$.

\paragraph{Performance profiles}
\label{sec:numerics:synthetic:perfprofile}

We generate each problem instance as described above with the parameters $\sparsitylevel=5$, $\ddim=500$, $\pdim=1000$, $\corrparam=0.9$ and $\snr=10$.
\Cref{fig:perfprofile} represents the percentage of instances solved (to machine precision) by each method within a given time budget.

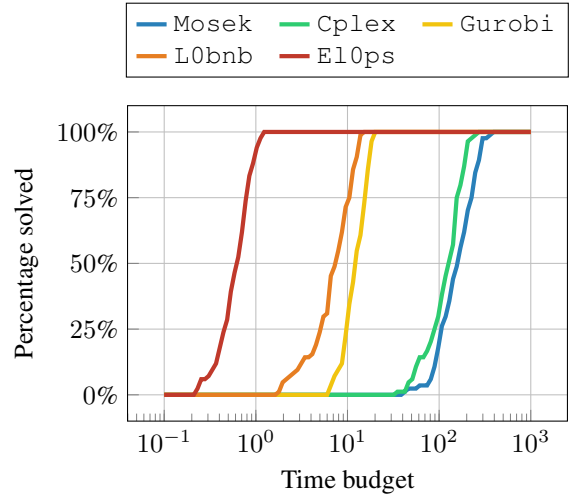
\begin{figure}[!t]
    \centering

\pgfplotscreateplotcyclelist{cycle_list_perfprofile}{
    mosekcolor, ultra thick\\    
    cplexcolor, ultra thick\\
    gurobicolor, ultra thick\\
    l0bnbcolor, ultra thick\\
    bnbcolor, ultra thick\\
}

\pgfplotsset{
 	legend image code/.code={
		\draw[mark repeat=2,mark phase=2] plot coordinates {
			(0cm,0cm)
			(0.4cm,0cm)
			(0.4cm,0cm)
		};
    }
}

\begin{tikzpicture}
    \begin{groupplot}[
        group style     = {
            group size      = 1 by 1,
            ylabels at      = edge left,
            yticklabels at  = edge left,
            horizontal sep  = 10pt,
        },
        height          = 0.7 \linewidth,
        width           = 0.9 \linewidth,
        xmode           = log,
        ytick pos       = left,
        xtick pos       = bottom,
        ytick           = {0,0.25,0.50,0.75,1},
        yticklabel      = {\pgfmathparse{100*\tick}\pgfmathprintnumber{\pgfmathresult}\%},
        grid            = major,
        cycle list name = cycle_list_perfprofile,
    ]

        \nextgroupplot[
            xlabel          = Time budget,
            ylabel          = Percentage solved,
            legend to name = perfprofile_legend,
            legend columns = 3,
            legend cell align={left},
            legend style={/tikz/every even column/.append style={column sep=0.25cm}},
        ]
        \foreach \solver in {mosek,cplex,gurobi,l0bnb,proposed}{
            \addplot table[
                x       = grid, 
                y       = \solver,
                col sep = comma,
            ] {data/perfprofile.csv};
        }
        \addlegendentry{\texttt{Mosek}}
        \addlegendentry{\texttt{Cplex}}
        \addlegendentry{\texttt{Gurobi}}
        \addlegendentry{\texttt{L0bnb}}
        \addlegendentry{\elops}
    \end{groupplot}
    \path (group c1r1.north east) -- node[above,yshift=0.25cm]{\ref{perfprofile_legend}} (group c1r1.north west);
\end{tikzpicture}
    \vspace*{-0.5cm}
    \caption{Performance profiles of different solvers.}
    \label{fig:perfprofile}
\end{figure}

We notice that regardless of the considered time budget, our method can solve a larger proportion of instances than its competitors.
In particular, all the problem instances are solved within a time budget for which no instances have been solved by the other methods.
More specifically, we observe that our methodology enables an acceleration of at least one order of magnitude with respect to the other procedures to solve all the problem instances.
In particular, we mention that the \gls{bnb} implementation choices of \elops{} are similar to those of \texttt{L0bnb}. 
This suggests that the observed improvement in terms of computation time is essentially due to our simultaneous pruning strategy.

\paragraph{Sensibility study}
To study more finely the gains permitted by the contribution proposed in this paper, we compare two different versions of a \gls{bnb} algorithm.
The first one implements the standard pruning strategy with the lower bound~\eqref{eq:std-lb} whereas the second one implements both this standard strategy and the proposed simultaneous pruning methodology involving lower bound~\eqref{eq:dual-bound}. 
We generate synthetic problem instances as described above by varying one parameter at a time to cover different working regimes.
In \Cref{fig:sensibility}, we represent the acceleration factor in terms of solving time obtained by our novel pruning strategy. 
More precisely, it is defined as the ratio of the solving times obtained with \elops{} where the simultaneous pruning tests (see \Cref{sec:screening:impact}) have been disabled and \elops{}.

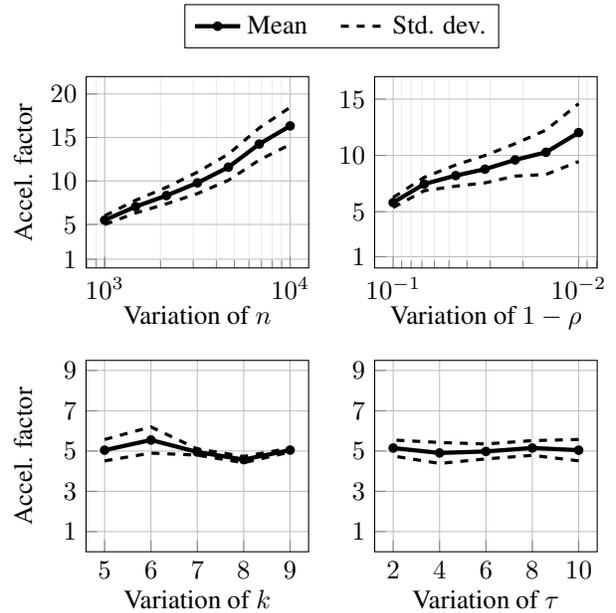
\begin{figure}[!t]
    \centering
    \pgfplotscreateplotcyclelist{cycle_list_sensibility}{
    black, solid, ultra thick, mark=*, mark options={scale=0.5}\\
    black, dashed, very thick\\
    black, dashed, very thick\\
}

\begin{tikzpicture}
    \begin{groupplot}[
        group style         = {
            group size      = 2 by 2,
            horizontal sep  = 25pt,
            vertical sep    = 35pt,
            ylabels at      = edge left,
        },
        x label style       = {at={(0.5,-0.15)}},
        y label style       = {at={(-0.2,0.5)}},
        height              = 0.5 \linewidth,
        width               = 0.55 \linewidth,
        ytick pos           = left,
        xtick pos           = bottom,
        grid                = both,
        minor grid style    = {gray!15},
        major grid style    = {black!25},
        ylabel              = Accel. factor,
        cycle list name     = cycle_list_sensibility,
    ]

        \nextgroupplot[
            xlabel          = Variation of $\pdim$,
            xmode           = log,
            ytick           = {1, 5, 10, 15, 20},
            ymin            = 0,
            ymax            = 22,
            legend to name  = sensibility_legend,
            legend columns  = 2,
            legend style    = {/tikz/column 2/.style={column sep=10pt}},
        ]
        \addplot table[
            x       = n, 
            y       = acceleration_mean,
            col sep = comma,
        ] {data/sensibility_n.csv};
        \addplot table[
            x       = n, 
            y expr  = {\thisrow{acceleration_mean} + \thisrow{acceleration_std}},
            col sep = comma,
        ] {data/sensibility_n.csv};
        \addplot table[
            x       = n, 
            y expr  = {\thisrow{acceleration_mean} - \thisrow{acceleration_std}},
            col sep = comma,
        ] {data/sensibility_n.csv};
        \addlegendentry{Mean}
        \addlegendentry{Std. dev.}

        \nextgroupplot[
            xlabel  = Variation of $1 - \corrparam$,
            xmode   = log,
            x dir   = reverse,
            ytick   = {1, 5, 10, 15},
            ymin    = 0,
            ymax    = 17,
        ]
        \addplot table[
            x expr  = {1 - \thisrow{rho}}, 
            y       = acceleration_mean,
            col sep = comma,
        ] {data/sensibility_rho.csv};
        \addplot table[
            x expr  = {1 - \thisrow{rho}}, 
            y expr  = {\thisrow{acceleration_mean} + \thisrow{acceleration_std}},
            col sep = comma,
        ] {data/sensibility_rho.csv};
        \addplot table[
            x expr  = {1 - \thisrow{rho}}, 
            y expr  = {\thisrow{acceleration_mean} - \thisrow{acceleration_std}},
            col sep = comma,
        ] {data/sensibility_rho.csv};

        \nextgroupplot[
            xlabel  = Variation of $\sparsitylevel$,
            ytick   = {1, 3, 5, 7, 9},
            xtick   = {5, 6, 7, 8, 9},
            ymin    = 0,
            ymax    = 9.5,
        ]
        \addplot table[
            x       = k, 
            y       = acceleration_mean,
            col sep = comma
        ] {data/sensibility_k.csv};
        \addplot table[
            x       = k, 
            y expr  = {\thisrow{acceleration_mean} + \thisrow{acceleration_std}},
            col sep = comma,
        ] {data/sensibility_k.csv};
        \addplot table[
            x       = k, 
            y expr  = {\thisrow{acceleration_mean} - \thisrow{acceleration_std}},
            col sep = comma,
        ] {data/sensibility_k.csv};

        \nextgroupplot[
            xlabel  = Variation of $\snr$,
            ytick   = {1, 3, 5, 7, 9},
            ymin    = 0,
            ymax    = 9.5,
        ]
        \addplot table[
            x       = snr, 
            y       = acceleration_mean,
            col sep = comma
        ] {data/sensibility_snr.csv};
        \addplot table[
            x       = snr, 
            y expr  = {\thisrow{acceleration_mean} + \thisrow{acceleration_std}},
            col sep = comma,
        ] {data/sensibility_snr.csv};
        \addplot table[
            x       = snr, 
            y expr  = {\thisrow{acceleration_mean} - \thisrow{acceleration_std}},
            col sep = comma,
        ] {data/sensibility_snr.csv};
    \end{groupplot}
    \path (group c1r1.north east) -- node[above,yshift=0.25cm]{\ref{sensibility_legend}} (group c2r1.north west);
\end{tikzpicture}
    \caption{Acceleration factor when implementing the simultaneous pruning tests in addition to the standard pruning strategy during the \gls{bnb} algorithm.}
    \label{fig:sensibility}
\end{figure}

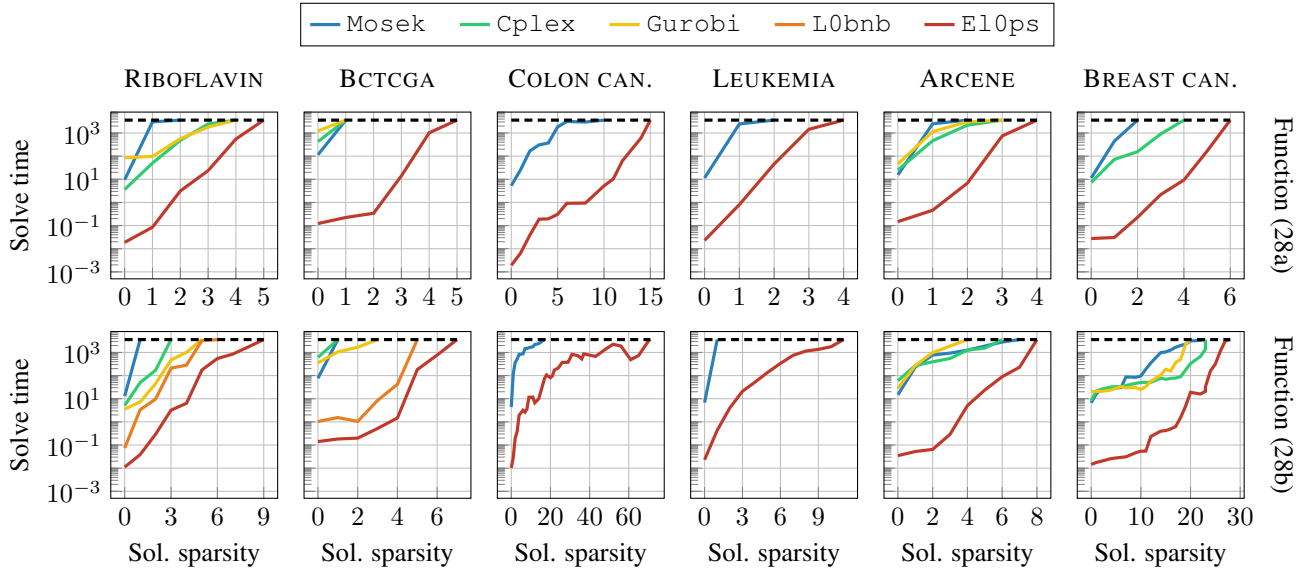
\begin{figure*}
    \centering
    \pgfplotscreateplotcyclelist{cycle_list_regpath_sparsity}{
    mosekcolor, very thick\\    
    cplexcolor, very thick\\
    gurobicolor, very thick\\
    l0bnbcolor, very thick\\
    bnbcolor, very thick\\
    black, densely dashed, very thick\\
}

\pgfplotsset{
 	legend image code/.code={
		\draw[mark repeat=2,mark phase=2] plot coordinates {
			(0cm,0cm)
			(0.4cm,0cm)
			(0.4cm,0cm)
		};
    }
}

\begin{tikzpicture}
    \begin{groupplot}[
        group style     = {
            group size      = 6 by 2,
            xlabels at      = edge bottom,
            yticklabels at  = edge left,
            vertical sep    = 20pt,
            horizontal sep  = 10pt,
        },
        height          = 3.8cm,
        width           = 3.8cm,
        ytick pos       = left,
        xtick pos       = bottom,
        grid            = major,
        ymode           = log,
        ymin            = 0.0005,
        ymax            = 8000,
        ytick           = {0.001,0.01,0.1,1,10,100,1000,10000},
        yticklabels     = {$10^{-3}$,,$10^{-1}$,,$10^{1}$,,$10^{3}$,},
        xlabel          = Sol. sparsity,
        cycle list name = cycle_list_regpath_sparsity,
    ]


        \nextgroupplot[
            title   = \textsc{Riboflavin},
            ylabel  = Solve time,
            xtick   = {0,1,2,3,4,5},
        ]
        \foreach \solver in {mosek,cplex,gurobi,l0bnb,proposed,max}{
            \addplot table[
                x       = n_nnz, 
                y       = \solver_solve_time,
                col sep = comma,
            ] {data/regpath_sparsity_riboflavin_l1bigm.csv};
        }

        \nextgroupplot[
            title   = \textsc{Bctcga},
            xtick   = {0,1,2,3,4,5},
        ]
        \foreach \solver in {mosek,cplex,gurobi,l0bnb,proposed,max}{
            \addplot table[
                x       = n_nnz, 
                y       = \solver_solve_time,
                col sep = comma,
            ] {data/regpath_sparsity_bctcga_l1bigm.csv};
        }

        \nextgroupplot[
            title   = \textsc{Colon can.},
        ]
        \foreach \solver in {mosek,cplex,gurobi,l0bnb,proposed,max}{
            \addplot table[
                x       = n_nnz, 
                y       = \solver_solve_time,
                col sep = comma,
            ] {data/regpath_sparsity_colon-cancer_l1bigm.csv};
        }

        \nextgroupplot[
            title   = \textsc{Leukemia},
            xtick   = {0,1,2,3,4},
        ]
        \foreach \solver in {mosek,cplex,gurobi,l0bnb,proposed,max}{
            \addplot table[
                x       = n_nnz, 
                y       = \solver_solve_time,
                col sep = comma,
            ] {data/regpath_sparsity_leukemia_l1bigm.csv};
        }

        \nextgroupplot[
            title   = \textsc{Arcene},
            xtick   = {0,1,2,3,4},
        ]
        \foreach \solver in {mosek,cplex,gurobi,l0bnb,proposed,max}{
            \addplot table[
                x       = n_nnz, 
                y       = \solver_solve_time,
                col sep = comma,
            ] {data/regpath_sparsity_arcene_l1bigm.csv};
        }

        \nextgroupplot[
            title           = \textsc{Breast can.},
            ylabel right    = Function \eqref{eq:bigml1},
        ]
        \foreach \solver in {mosek,cplex,gurobi,l0bnb,proposed,max}{
            \addplot table[
                x       = n_nnz, 
                y       = \solver_solve_time,
                col sep = comma,
            ] {data/regpath_sparsity_breast-cancer_l1bigm.csv};
        }


        \nextgroupplot[
            ylabel          = Solve time,
            xtick           = {0,3,6,9},
            legend to name  = regpath_sparsity_legend,
            legend style    = {
                legend columns = 6,
                /tikz/every even column/.style = {column sep=10pt},
            },  
        ]
        \foreach \solver in {mosek,cplex,gurobi,l0bnb,proposed,max}{
            \addplot table[
                x       = n_nnz, 
                y       = \solver_solve_time,
                col sep = comma,
            ] {data/regpath_sparsity_riboflavin_l2bigm.csv};
        }
        \addlegendentry{\texttt{Mosek}}
        \addlegendentry{\texttt{Cplex}}
        \addlegendentry{\texttt{Gurobi}}
        \addlegendentry{\texttt{L0bnb}}
        \addlegendentry{\texttt{El0ps}}

        \nextgroupplot
        \foreach \solver in {mosek,cplex,gurobi,l0bnb,proposed,max}{
            \addplot table[
                x       = n_nnz, 
                y       = \solver_solve_time,
                col sep = comma,
            ] {data/regpath_sparsity_bctcga_l2bigm.csv};
        }

        \nextgroupplot
        \foreach \solver in {mosek,cplex,gurobi,l0bnb,proposed,max}{
            \addplot table[
                x       = n_nnz, 
                y       = \solver_solve_time,
                col sep = comma,
            ] {data/regpath_sparsity_colon-cancer_l2bigm.csv};
        }

        \nextgroupplot[
            xtick   = {0,3,6,9},
        ]
        \foreach \solver in {mosek,cplex,gurobi,l0bnb,proposed,max}{
            \addplot table[
                x       = n_nnz, 
                y       = \solver_solve_time,
                col sep = comma,
            ] {data/regpath_sparsity_leukemia_l2bigm.csv};
        }

        \nextgroupplot
        \foreach \solver in {mosek,cplex,gurobi,l0bnb,proposed,max}{
            \addplot table[
                x       = n_nnz, 
                y       = \solver_solve_time,
                col sep = comma,
            ] {data/regpath_sparsity_arcene_l2bigm.csv};
        }

        \nextgroupplot[
            ylabel right    = Function \eqref{eq:bigml2},
        ]
        \foreach \solver in {mosek,cplex,gurobi,l0bnb,proposed,max}{
            \addplot table[
                x       = n_nnz, 
                y       = \solver_solve_time,
                col sep = comma,
            ] {data/regpath_sparsity_breast-cancer_l2bigm.csv};
        }
    \end{groupplot}
    \path (group c1r1.north east) -- node[above,yshift=0.75cm]{\ref{regpath_sparsity_legend}} (group c6r1.north west);
\end{tikzpicture}
    \caption{Time to construct solutions with a given sparsity level. The black dotted line represents the maximum time of one hour allowed.}
    \label{fig:regpath-sparsity}
\end{figure*}

We remark that the gains obtained by our methodology increase with the dimension parameter $\pdim$ and the correlation parameters $\corrparam$.
In contrast, the gain does not seem to be significantly impacted by the sparsity level $\sparsitylevel$ or the signal-to-noise ratio $\snr$. 
This suggests that depending on the characteristics of the problem, the proposed pruning methodology can lead to different gains in terms of running time.
We note nonetheless that in all the tested scenarios, the solving time is improved by at least a factor $5$.

\subsection{Performance on Real-World Datasets}
\label{sec:numerics:realworld}

In this section, we assess the proposed pruning methodology on six real-world datasets.

\paragraph{Problems and datasets} 
We address feature selection problems, which correspond to typical machine learning tasks. 
Each task corresponds to an instance of the loss function $\lossfunc$ and two choices of $\pertfunc$, as described below. 
Each dataset provides a matrix \(\dic \in \kR^{\ddim \times \pdim}\) and a vector \(\obs \in \kR^{\ddim}\).
The dimension of each dataset is specified in \Cref{tab:datasets}.

First, we consider linear regression with the least-squares loss function:
\begin{equation} \label{eq:leastsquares-loss}
    \lossfunc(\cdot) = \tfrac{1}{2}\norm{\obs - \cdot}{2}^2
    .
\end{equation}
We use the instances of \(\obs\) and \(\dic\) provided by the \textsc{Riboflavin}~\cite{buhlmann2014high} and \textsc{Bctcga}~\cite{liu2018integrated} datasets which are related to vitamin production and cancer screening, respectively.

Second, we consider binary classification tasks with the logistic loss function:
\begin{equation} \label{eq:logistic-loss}
    \lossfunc(\cdot) = \transp{\1}\log(\1 + \exp(-\obs \odot \cdot))
\end{equation}
where $\odot$ denotes the Hadamard product and the functions $\log(\cdot)$ and $\exp(\cdot)$ are taken component-wise.
We use instances of \(\obs\) and \(\dic\) from the \textsc{Colon cancer}~\cite{alon1999broad} and~\textsc{Leukemia} \cite{golub1999molecular} datasets related to cancer screening.

Finally, we consider binary classification tasks with the squared hinge loss:
\begin{equation} \label{eq:squaredhinge-loss}
    \lossfunc(\cdot) = \norm{\pospart{\1 - \obs \odot \cdot}}{2}^2
\end{equation}
where $\pospart{\cdot}$ is taken component-wise.
We use instances of \(\obs\) and \(\dic\) from the datasets \textsc{Breast cancer} and \textsc{Arcene} \cite{chang2011libsvm} related to DNA analysis and tumor categorization, respectively.

\begin{table}[!t]
    \center
    \begin{tabular}{ccrr}
    \toprule
    Dataset & \(\lossfunc\) & $\ddim$ & $\pdim$ \\ 
    \midrule
    \textsc{Riboflavin} & Least-squares & 71 & 4,088 \\
    \textsc{Bctcga} & Least-squares & 536 & 17,322 \\
    \textsc{Colon cancer} & Logistic & 62 & 2,000 \\
    \textsc{Leukemia} & Logistic & 38 & 7,129 \\
    \textsc{Breast cancer} & Squared-hinge & 44 & 7,129 \\
    \textsc{Arcene} & Squared-hinge & 100 & 10,000 \\
    \bottomrule
\end{tabular}
    \caption{Dimensions of the datasets and data fidelity term \(\lossfunc\).}
    \label{tab:datasets}
\end{table}

For each dataset, we consider the two following choices:
\begin{subequations}
    \begin{alignat}{4}
        \label{eq:bigml1}
        \pertfunc(\cdot) &= \alpha\abs{\cdot} &&+ \icvx(\abs{\cdot} \leq \bigM) \\
        \label{eq:bigml2}
        \pertfunc(\cdot) &= \alpha\abs{\cdot}^2 &&+ \icvx(\abs{\cdot} \leq \bigM)
    \end{alignat}
\end{subequations}
where $\alpha>0$ and $\bigM>0$.
These choices are motivated by the statistical properties of the solutions that can be obtained \cite{dedieu2021learning}.

We mention that the \texttt{Cplex} and \texttt{Gurobi} solvers can only handle linear and quadratic functions. Hence, they cannot address problem instances involving the logistic loss \eqref{eq:logistic-loss}.
Moreover, \texttt{L0bnb} can only handle instances combining functions \eqref{eq:leastsquares-loss} and \eqref{eq:bigml2}.

\paragraph{Performance profiles}

For each problem instance, we first calibrate the hyperparameters $\alpha>0$ and $\bigM>0$ as explained in \Cref{sec:supp_numerics:hyperparameters}.
We then fit a regularization path \cite{friedman2010regularization}, that is, we vary the value of $\reg$ to construct solutions with different sparsity levels.
We start at some $\reg$ so that the all-zero vector is a solution of \eqref{prob:prob} and sequentially decrease its value as long as at least one solver can solve the problem within one hour.
The solution obtained for each value of $\reg$ is used as a warm-start for the problem with the next value of $\reg$ considered in the regularization path.

\Cref{fig:regpath-sparsity} represents the time needed by each solver to construct a solution with a given sparsity level.
We observe that the implementation of the pruning methodology proposed in this paper allows for significant gains in terms of running time.
More precisely, our method outperforms the other solvers in all the considered scenarios. 
In comparison with off-the-shelf \gls{mip} solvers such as \texttt{Mosek}, \texttt{Cplex}, and \texttt{Gurobi}, the time savings can reach up to four orders of magnitude in the most favorable cases. 
Regarding the specialized solver \texttt{L0bnb}, improvements of up to two orders of magnitude are achievable by our method in the best-case scenarios.


\section{Conclusion}

In this paper, we introduce a new methodology to perform pruning tests in Branch-and-Bound algorithms addressing $\ell_0$-regularized optimization problems. 
Our method is only grounded on a few hypotheses and can thus be applied to a large variety of problems, notably in machine learning. 
Our numerical results demonstrate that the proposed methodology significantly reduces solving time compared to other state-of-the-art methods. 
It therefore allows to address some problem instances that were out of computational reach so far.

\section*{Impact Statement}
The goal of our work is to accelerate the solving time of some particular optimization problems.   
Our contribution is primarily methodological and any of its potential societal impact would only be indirectly related to our work.

\bibliography{main}
\bibliographystyle{icml2024}

\newpage
\appendix
\onecolumn

\section{Supplementary Material Related to \texorpdfstring{\Cref{sec:bnb,sec:screening}}{Sections~\ref{sec:bnb} and~\ref{sec:screening}}} \label{sec:proofs}

This section gathers discussions and proofs of the results presented in \Cref{sec:bnb,sec:screening} of the paper.

\subsection{Discussion on \texorpdfstring{\Cref{sec:bnb:complexity}}{Section~\ref{sec:bnb:complexity}}}
\label{sec:proofs:complexity}

{
\newcommand{\idxdepth}{\ell}
\newcommand{\criticalidx}{\idxdepth_c}
\newcommand{\initialnode}{\nodeSymb_{0}}
\newcommand{\sequencenode}[1]{\nodeSymb_{#1}}
\newcommand{\lengthnone}{L}

In this paragraph, we give additional details on the relation between the tightness of lower-bound \eqref{eq:std-lb} and the depth of the node at which it is computed.
Our claim is grounded on the two results stated in the following lemma:
\begin{lemma} \label{lemma appendix discussion}
    Let \(\nodeSymb=(\setone,\setzero,\setnone)\) be a node of the \gls{bnb} tree. 
    Then
    \begin{enumerate}
        \item For all successors \(\nodeSymb'\) of \(\nodeSymb\), we have \(\robj^{\nodeSymb'} \geq \robj^{\nodeSymb}\).
        \item If \ref{hyp:pertfunc}-\ref{hyp:zero-minimized} hold and \(\setnone=\emptyset\) then \(\node{\robj} = \node{\pobj}\).
        \label{item:lemma appendix discussion:robj = pobj}
    \end{enumerate}
\end{lemma}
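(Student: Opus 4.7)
The plan is a straightforward monotonicity cascade from the successor hypothesis through the construction of $\robj^{\nodeSymb}$. The assumption that $\nodeSymb'$ is a successor of $\nodeSymb$ means $\setzero \subseteq \setzero'$ and $\setone \subseteq \setone'$; by~\eqref{eq:region} this immediately gives $\pset^{\nodeSymb'} \subseteq \pset^{\nodeSymb}$, and hence $\icvx(\pv \in \pset^{\nodeSymb}) \leq \icvx(\pv \in \pset^{\nodeSymb'})$ for all $\pv$. Combining with~\eqref{eq:node-regfunc} yields $\node{\regfunc} \leq \regfunc^{\nodeSymb'}$ pointwise. Convex conjugation reverses inequalities; applying it twice restores them, so $\biconj{(\node{\regfunc})} \leq \biconj{(\regfunc^{\nodeSymb'})}$ pointwise. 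Adding $\lossfunc(\dic\pv)$ to both sides and taking the infimum over $\pv$ delivers $\node{\robj} \leq \robj^{\nodeSymb'}$.

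\textbf{Plan for Part 2.} The inequality $\node{\robj} \leq \node{\pobj}$ is automatic from the biconjugate bound $\biconj{(\node{\regfunc})} \leq \node{\regfunc}$. For the reverse, I would first make $\biconj{(\node{\regfunc})}$ explicit when $\setnone = \emptyset$: separability of $\node{\regfunc}$ combined with~\Cref{prop:dual-relaxregfunc} and $\pertfunc^{\star\star}=\pertfunc$ (a consequence of~\ref{hyp:pertfunc}) gives $\biconj{(\separable{\node{\regfunc}}{\idxentry})}(\pvi{\idxentry}) = \icvx(\pvi{\idxentry}=0)$ for $\idxentry\in\setzero$ and $\biconj{(\separable{\node{\regfunc}}{\idxentry})}(\pvi{\idxentry}) = \reg + \pertfunc(\pvi{\idxentry})$ for $\idxentry\in\setone$. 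Compared with $\separable{\node{\regfunc}}{\idxentry}$, the only modification for $\idxentry\in\setone$ is that the open condition $\pvi{\idxentry}\neq 0$ has disappeared, so it only remains to show that removing this condition does not decrease the infimum. I would pick a minimizing sequence $\{\opt{\pv}_n\}$ for $\node{\robj}$ and form the convex combinations $\pv^{(n)} = (1-\alpha_n)\opt{\pv}_n + \alpha_n \pv^\circ$ with $\alpha_n \downarrow 0$, where $\pv^\circ$ is an auxiliary point constructed coordinate-wise: by~\ref{hyp:0-in-intdom}, one picks for each $\idxentry\in\setone$ some $\pvi{\idxentry}^\circ \in \dom(\pertfunc)\setminus\{0\}$ of arbitrarily small modulus, exploiting that convexity of $\pertfunc$ together with $\pertfunc(0)=0$ (hypothesis~\ref{hyp:zero-minimized}) yields $\pertfunc(t\pvi{\idxentry}^\circ) \leq t\pertfunc(\pvi{\idxentry}^\circ)$ and hence makes the $\pertfunc$-cost of $\pv^\circ$ as small as desired. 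For $n$ large enough, $\pv^{(n)}_\setone \neq \0$ coordinate-wise, hence $\pv^{(n)} \in \node{\pset}$; moreover, by joint convexity of $\pv \mapsto \lossfunc(\dic\pv) + \sum_\idxentry \pertfunc(\pvi{\idxentry})$, we obtain $\limsup_n [\lossfunc(\dic\pv^{(n)}) + \sum_\idxentry\pertfunc(\pvi{\idxentry}^{(n)})] \leq \node{\robj}$, yielding $\node{\pobj}\leq\node{\robj}$.

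\textbf{Main obstacle.} The delicate technical point is that $\pv^\circ$ must simultaneously satisfy $\pvi{\idxentry}^\circ \neq 0$ on $\setone$ \emph{and} place $\dic\pv^\circ$ inside $\dom(\lossfunc)$, so that the convex combination carries a finite objective and the limsup bound above is non-vacuous. The ray-based construction of $\pv^\circ$ allows its norm to be taken arbitrarily small, so $\dic\pv^\circ$ stays close to $\dic\0 = \0$; under the standing mild condition $\lossfunc(\0)<\infty$ satisfied by every loss considered in the paper, this places $\dic\pv^\circ$ in $\dom(\lossfunc)$. Once this is in place, the combined convexity of $\lossfunc$ and $\pertfunc$ together with the lower semicontinuity provided by~\ref{hyp:lossfunc}-\ref{hyp:pertfunc} controls both the limsup bound and the right-continuity of $\lossfunc$ along the perturbation direction, closing the argument. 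This is the only step where hypothesis~\ref{hyp:0-in-intdom} enters in a non-trivial way.
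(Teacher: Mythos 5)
Your Part~1 is the paper's argument exactly: successorship gives \(\pset^{\nodeSymb'}\subseteq\node{\pset}\) via \eqref{eq:region}, hence \(\regfunc^{\nodeSymb'}\geq\node{\regfunc}\) from \eqref{eq:node-regfunc}, monotonicity of biconjugation (the paper cites Item~(ii) of Proposition~13.16 in Bauschke--Combettes where you derive it by applying order-reversing conjugation twice), then add the loss and take infima.

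Your Part~2, by contrast, takes a genuinely different route. The paper disposes of it in one line: by \Cref{prop:dual-relaxregfunc} and Theorem~4.8 in Beck, it asserts \(\biconj{(\node{\regfunc})} = \node{\regfunc}\) when \(\setnone=\emptyset\), ``hence the result directly follows''. You instead compute \(\biconj{(\node{\regfunc})}\) explicitly (correctly, and matching \eqref{eq:supp_numerics:implementation:1:3}: \(\icvx(\pvi{}=0)\) on \(\setzero\), \(\pertfunc+\reg\) on \(\setone\)) and then prove equality of the two \emph{infima} by perturbing a minimizing sequence of the relaxation into \(\node{\pset}\). Your version is more work but also more honest: the paper's pointwise identity is in fact false at any \(\pv\) with \(\pvi{\idxentry}=0\) for some \(\idxentry\in\setone\) (there \(\node{\regfunc}(\pv)=+\infty\) while the biconjugate contributes the finite value \(\pertfunc(0)+\reg=\reg\)), since \(\node{\regfunc}\) is not closed on those coordinates. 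What actually needs proving is precisely that deleting the open constraints \(\pvi{\idxentry}\neq 0\) does not change the infimum --- the multivariate, loss-coupled analogue of the paper's \Cref{lemma:infima=} --- and your convex-combination scheme supplies exactly that.

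There is, however, a concrete flaw in your resolution of the ``main obstacle''. From \(\lossfunc(\0)<+\infty\) and \(\norm{\pv^\circ}{2}\) small you conclude \(\dic\pv^\circ\in\dom(\lossfunc)\); but proximity to a point of the domain is not membership: \(\dom(\lossfunc)\) may have empty interior. Take \(\lossfunc=\icvx(\cdot=\0)\) (which satisfies \ref{hyp:lossfunc} and \(\lossfunc(\0)=0\)), \(\pertfunc\equiv 0\), \(\dic\) injective, \(\setnone=\emptyset\) and \(\setone\neq\emptyset\): then \(\node{\robj}=\reg\card{\setone}<+\infty=\node{\pobj}\), so the conclusion itself fails and no perturbation argument can rescue it. Any correct proof therefore needs a qualification such as \(\0\in\interior(\dom(\lossfunc))\), or \(\lossfunc\) real-valued --- true for every loss in the paper but not implied by \ref{hyp:lossfunc}--\ref{hyp:zero-minimized}; note the paper's one-liner silently suffers from the same defect, which your write-up at least surfaces. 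A second, minor repair: ``for \(n\) large enough, \(\pv^{(n)}_{\setone}\neq\0\) coordinate-wise'' is not automatic, since \ref{hyp:0-in-intdom} may force the sign of \(\pvi{\idxentry}^\circ\) (\eg, \(\dom(\pertfunc)\subseteq[0,+\infty)\)) and \((1-\alpha_n)(\opt{\pv}_n)_{\idxentry}+\alpha_n\pvi{\idxentry}^\circ\) can then vanish; it suffices to pick each \(\alpha_n\) outside the at most \(\card{\setone}\) values annihilating a coordinate. With these two repairs your argument is complete and, unlike the paper's, makes explicit where \ref{hyp:0-in-intdom} and the loss-domain qualification enter.
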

\begin{proof}
    We prove the two items separately.
    \begin{enumerate}
        \item Let \(\nodeSymb'=(\setone',\setzero',\setnone')\) be a successor of \(\nodeSymb\).
        By definition of a successor, we observe from~\eqref{eq:region} that \(\pset^{\nodeSymb'} \subseteq \node{\pset}\).
        Therefore, using the definitions of \({\regfunc}^{\nodeSymb}\) and \({\regfunc}^{\nodeSymb'}\) in~\eqref{eq:node-regfunc}, we have 
        \begin{equation} \label{eq:proof bounds get tighter:central inequality}
            {\regfunc}^{\nodeSymb'}(\cdot)
            \geq
            {\regfunc}^{\nodeSymb}(\cdot) 
            .
        \end{equation}
        \label{item:lemma appendix discussion:robj is increasing}
        Item~\textit{(ii)} of Proposition~13.16 in \cite{bauschke2017convex} then leads to
        \begin{equation}
            \biconj{(\regfunc^{\nodeSymb'})}(\cdot)
            \geq
            \biconj{(\node{\regfunc})}(\cdot)
            .
        \end{equation}
        Thus, we have the inequality \(\lossfunc(\dic\cdot) + \biconj{(\regfunc^{\nodeSymb'})}(\cdot) \geq \lossfunc(\dic\cdot) + \biconj{(\regfunc^{\nodeSymb})}(\cdot)\).
        By taking the infimum on both sides of this inequality, we obtain the desired result.

        \item If \ref{hyp:pertfunc}-\ref{hyp:zero-minimized} hold and \(\setnone=\emptyset\), one has \(\biconj{(\node{\regfunc})} = \node{\regfunc}\) from \Cref{prop:dual-relaxregfunc} and Theorem 4.8 in \citep{beck2017first}. Hence, the result directly follows. 
    \end{enumerate}
\end{proof}
We now motivate our claim ``achieving the prescribed tightness usually imposes exploring \emph{deep} nodes in the decision tree'' in light of the following analysis.

Let \(\UB{\pobj}\) be an upper bound on \(\opt{\pobj}\) and \(\initialnode=(\setone^{(0)},\setzero^{(0)},\setnone^{(0)})\) be a node\footnote{
    Contrary to the notational convention used in~\Cref{sec:bnb:exploration}, \(\initialnode\) does not necessarily refers (here) to the root node \((\emptyset,\emptyset,\intervint{1}{\pdim})\).
} of the \gls{bnb} tree satisfying \(\setnone^{(0)}\neq\emptyset\) and
\begin{equation} \label{eq:appendix discussion:node passes ideal test}
    \pobj^{\initialnode} 
    > 
    \UB{\pobj}
    \geq 
    \robj^{\initialnode}
    .
\end{equation}
Denote \(\lengthnone=\card{\setnone^{(0)}}\).
The first inequality guarantees that \(\pset^{\nodeSymb_0}\) definitively excludes any minimizer of~\eqref{prob:prob}, while the second indicates that the pruning test~\eqref{eq:pruning-test} with the lower bound~\eqref{eq:std-lb} is unsuccessful.
We nevertheless demonstrate that  for any sequence \(\{\sequencenode{\idxdepth}\}_{\idxdepth=1}^{\lengthnone}\)
satisfying 
\begin{equation}\label{eq:cond-node-sequence}
    \forall\idxdepth\in\intervint{0}{\lengthnone-1}:\quad
    \sequencenode{\idxdepth+1} \text{ is a direct successor of } \sequencenode{\idxdepth}
    ,
\end{equation}
there exists an index \(\criticalidx\in\intervint{1}{\lengthnone}\) 
such that \(\sequencenode{\criticalidx}\) passes test~\eqref{eq:pruning-test} with the lower bound~\eqref{eq:std-lb}.

Let \(\{\sequencenode{\idxdepth}\}_{\idxdepth=1}^{\lengthnone}\) be a sequence verifying \eqref{eq:cond-node-sequence}. 
First, using item~\ref{item:lemma appendix discussion:robj is increasing} of \Cref{lemma appendix discussion}, we have that $\delta_\idxdepth \triangleq \robj^{\sequencenode{\idxdepth}} - \UB{\pobj}$ is a non-decreasing function of $\idxdepth$. 
Note that the second inequality in \eqref{eq:appendix discussion:node passes ideal test} leads to \(\delta_0 \leq 0\).
Moreover, we have that \(\delta_{\lengthnone} > 0\) by virtue of the following arguments: first, the definition of a successor implies that \(\pset^{\sequencenode{\lengthnone}} \subsetneq \pset^{\initialnode}\) and therefore \(\pobj^{\sequencenode{\lengthnone}} \geq \pobj^{\initialnode}\); second, we have from item~\ref{item:lemma appendix discussion:robj = pobj} of \Cref{lemma appendix discussion} that \(\pobj^{\nodeSymb_{\lengthnone}} = \robj^{\sequencenode{\lengthnone}}\); finally the result follows from the first inequality in \eqref{eq:appendix discussion:node passes ideal test}.

In conclusion, since \(\delta_{\lengthnone} > 0\), the pruning test~\eqref{eq:pruning-test} with the lower bound~\eqref{eq:std-lb} will ultimately succeed if enough successors are visited by the \gls{bnb} procedure.
Furthermore, the likelihood of the success increases as the \gls{bnb} procedure explores deeper nodes in the tree because \(\delta_\idxdepth\) is a non-decreasing function of \(\idxdepth\).
}

\subsection{Proof of \texorpdfstring{\Cref{prop:dual-relaxregfunc}}{Proposition~\ref{prop:dual-relaxregfunc}}} \label{sec:proof:dual-regfunc}

We first prove the following technical lemma:
{
\newcommand{\idxsequence}{i}
\newcommand{\proofsequence}[1]{\pvi{}^{(#1)}}
\newcommand{\proofsubsequence}[1]{\widetilde{\pvi{}}^{(#1)}}
\begin{lemma}
    \label{lemma:infima=}
    Let $\kfuncdef{\prooffun}{\kR}{\kR\cup\{+\infty\}}$ be a closed, convex, proper function and  
    let \(\pvi{0}\in\dom(\prooffun)\) be an accumulation point of \(\dom(\prooffun)\).
    Then we have 
    \begin{equation}
        \inf_{\pvi{}\in\kR\setminus\{\pvi{0}\}} \prooffun(\pvi{})
        =
        \inf_{\pvi{}\in\kR} \prooffun(\pvi{}).
    \end{equation}
\end{lemma}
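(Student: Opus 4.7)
The plan is to establish the two inequalities separately. The direction \(\inf_{\pvi{}\in\kR\setminus\{\pvi{0}\}} \prooffun(\pvi{}) \geq \inf_{\pvi{}\in\kR} \prooffun(\pvi{})\) is immediate since the right-hand side is an infimum taken over a superset. For the reverse inequality, I will prove the stronger bound \(\inf_{\pvi{}\in\kR\setminus\{\pvi{0}\}} \prooffun(\pvi{}) \leq \prooffun(\pvi{0})\); together with the trivial direction, this yields the claimed equality, since \(\inf_{\pvi{}\in\kR}\prooffun(\pvi{}) = \min\{\prooffun(\pvi{0}),\,\inf_{\pvi{}\neq\pvi{0}}\prooffun(\pvi{})\}\).

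To establish this bound, I would invoke the accumulation-point hypothesis to extract a sequence \(\{\pvi{n}\}_n \subseteq \dom(\prooffun) \setminus \{\pvi{0}\}\) with \(\pvi{n} \to \pvi{0}\). Since \(\prooffun\) is proper and convex on \(\kR\), its effective domain is an interval, so after passing to a subsequence I may assume all the \(\pvi{n}\) lie on the same side of \(\pvi{0}\) and that \((\pvi{n})\) is monotone. Fixing \(\pvi{1}\), for every \(n\) large enough the point \(\pvi{n}\) lies strictly between \(\pvi{0}\) and \(\pvi{1}\), so I may write \(\pvi{n} = (1-t_n)\pvi{0} + t_n \pvi{1}\) with \(t_n \in (0,1)\) and \(t_n \to 0\). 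Convexity of \(\prooffun\) then yields
\begin{equation*}
    \prooffun(\pvi{n}) \leq (1-t_n)\prooffun(\pvi{0}) + t_n \prooffun(\pvi{1}).
\end{equation*}
Since both \(\prooffun(\pvi{0})\) and \(\prooffun(\pvi{1})\) are finite (as \(\pvi{0},\pvi{1} \in \dom(\prooffun)\)), passing to the limit gives \(\limsup_n \prooffun(\pvi{n}) \leq \prooffun(\pvi{0})\). Combined with \(\inf_{\pvi{}\neq\pvi{0}} \prooffun(\pvi{}) \leq \prooffun(\pvi{n})\) for every \(n\), this closes the argument.

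The main subtlety is the case in which \(\pvi{0}\) sits on the boundary of \(\dom(\prooffun)\), where convex functions may fail to be continuous. Interestingly, the closedness assumption on \(\prooffun\) plays no role in the argument above; what is essential is that the accumulation-point hypothesis produces approaching points \emph{inside} \(\dom(\prooffun)\), which ensures that the convex-combination bound is both valid and finite. This hypothesis is also necessary for the conclusion: if for example \(\dom(\prooffun)=\{\pvi{0}\}\), the left-hand side evaluates to \(+\infty\) while the right-hand side equals \(\prooffun(\pvi{0})\).
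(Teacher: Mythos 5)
Your proof is correct, but it follows a genuinely different route from the paper's. The paper also disposes of the trivial inequality first, but then splits the reverse direction into two cases according to whether \(\inf_{\pvi{}\in\dom(\prooffun)}\prooffun(\pvi{})\) equals \(\prooffun(\pvi{0})\) or is strictly smaller: in the first case it invokes the continuity of closed proper convex functions on their domain (Theorem~2.22 in \cite{beck2017first}) along a sequence in \(\dom(\prooffun)\setminus\{\pvi{0}\}\) converging to \(\pvi{0}\) --- this is precisely where closedness enters --- and in the second case it extracts, from a minimizing sequence, a tail that avoids \(\pvi{0}\). You replace this case analysis by the single stronger estimate \(\inf_{\pvi{}\neq\pvi{0}}\prooffun(\pvi{})\leq\prooffun(\pvi{0})\), obtained from the convexity inequality \(\prooffun(\pvi{n})\leq(1-t_n)\prooffun(\pvi{0})+t_n\prooffun(\pvi{1})\) with \(t_n\to 0\), after which the equality of infima follows from \(\inf_{\pvi{}\in\kR}\prooffun(\pvi{})=\min\{\prooffun(\pvi{0}),\inf_{\pvi{}\neq\pvi{0}}\prooffun(\pvi{})\}\). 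Your observation that closedness is never used is accurate: your argument only needs the ``upper'' half of continuity at \(\pvi{0}\), which convexity supplies for free, whereas closedness is what would deliver the lower-semicontinuity half that your proof never requires; so your version establishes the lemma for all proper convex \(\prooffun\). Two small remarks: the monotone-subsequence extraction is dispensable, since \(\dom(\prooffun)\) is an interval you may simply pick any \(\pvi{1}\in\dom(\prooffun)\setminus\{\pvi{0}\}\) and use the points \((1-t)\pvi{0}+t\pvi{1}\in\dom(\prooffun)\setminus\{\pvi{0}\}\) as \(t\downarrow 0\) directly; and, relatedly, in this one-dimensional convex setting the accumulation-point hypothesis is equivalent to the mere nonemptiness of \(\dom(\prooffun)\setminus\{\pvi{0}\}\), which your final counterexample \(\dom(\prooffun)=\{\pvi{0}\}\) correctly identifies as the obstruction. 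Net assessment: your proof is more elementary (no appeal to an external continuity theorem), avoids the paper's case split, and proves a slightly more general statement; the paper's proof trades this for reliance on a standard cited result.
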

\begin{proof}
    We obviously have
    \begin{equation}
        \inf_{\pvi{}\in\kR\setminus\{\pvi{0}\}} \prooffun(\pvi{})
        \geq
        \inf_{\pvi{}\in\kR} \prooffun(\pvi{})
        ,
    \end{equation}
    so we concentrate on the reverse inequality hereafter. 
    
    First, using the fact that \(\pvi{0}\) is an accumulation point of \(\dom(\prooffun)\), we have that \(\dom(\prooffun)\setminus\{\pvi{0}\}\neq \emptyset\) and therefore
    \begin{align}
        \inf_{\pvi{}\in\kR} \prooffun(\pvi{}) &=  \inf_{\pvi{}\in\dom(\prooffun)} \prooffun(\pvi{})
        \\
        \inf_{\pvi{}\in\kR\setminus\{\pvi{0}\}} \prooffun(\pvi{}) &=  \inf_{\pvi{}\in\dom(\prooffun)\setminus\{\pvi{0}\}} \prooffun(\pvi{})
        .
    \end{align}
    It is thus sufficient to prove that
    \begin{equation} \label{eq:proof lemma infima=:target inequality}
        \inf_{\pvi{}\in\dom(\prooffun)\setminus\{\pvi{0}\}} \prooffun(\pvi{})
        \leq
        \inf_{\pvi{}\in\dom(\prooffun)} \prooffun(\pvi{})
        .
    \end{equation}
    Second, using the fact that \(\pvi{0}\in\dom(\prooffun)\) by hypothesis, we also have
    \begin{equation} \label{eq:proof lemma infima=:infima in not +infty}
        \inf_{\pvi{} \in\dom(\prooffun)} \prooffun(\pvi{})
        \leq \prooffun(\pvi{0})
        < +\infty
        .
    \end{equation}
    We then prove~\eqref{eq:proof lemma infima=:target inequality} by considering two separate cases. 
    \begin{itemize}
        \item Assume first that 
        \begin{equation} \label{eq:proof lemma infima=:infima in not +infty:case 1}
            \inf_{\pvi{} \in\dom(\prooffun)} \prooffun(\pvi{})
            =
            \prooffun(\pvi{0})
            .
        \end{equation}
        Since \(\pvi{0}\) is an accumulation point of \(\dom(\prooffun)\), there exists a sequence \(\{\pvi{}^{(\idxsequence)}\}_{\idxsequence\in\kN}\subset\dom(\prooffun)\setminus\{\pvi{0}\}\).\footnote{
            \label{footnote:construction de suite convergeant vers x0}
            Such a sequence can be constructed as follows: for all \(\idxsequence\in\kN\), one chooses \(\pvi{}^{(\idxsequence)}\in(\proofset\cap B(\pvi{0}, \kinv{(\idxsequence+1)}))\setminus\{\pvi{0}\}\) which is nonempty by definition of an accumulation point. 
            }
        From Th. 2.22 in \cite{beck2017first}, we have that closedness, convexity and properness of \(\prooffun\) implies that it is continuous on its domain.\footnote{
            That is for any $\{x^{(\idxsequence)}\}_{\idxsequence\in\kN} \subset \dom(\prooffun)$ converging to some limit point $\pvi{0}\in\dom(\prooffun)$, we have $\lim_{\idxsequence\rightarrow+\infty} \prooffun(\pvi{}^{(\idxsequence)})=\prooffun(\pvi{0})$.
        }
        Hence \(\lim_{\idxsequence\rightarrow+\infty} \prooffun(\pvi{}^{(\idxsequence)})=\prooffun(\pvi{0})\) and therefore
        \begin{equation} \label{eq:proof lemma infima=:infima in not +infty:case 1 buf}
            \inf_{\pvi{}\in\dom(\prooffun)\setminus\{\pvi{0}\}}  \prooffun(\pvi{})
            \leq
            \prooffun(\pvi{0})
            .
        \end{equation}
        Inequality~\eqref{eq:proof lemma infima=:target inequality} immediately follows by combining~\eqref{eq:proof lemma infima=:infima in not +infty:case 1} and~\eqref{eq:proof lemma infima=:infima in not +infty:case 1 buf}.

        \item Assume now that
        \begin{equation} \label{eq:proof lemma infima=:infima in not +infty:case 2}
            \inf_{\pvi{} \in\dom(\prooffun)} \prooffun(\pvi{})
            <
            \prooffun(\pvi{0})
        \end{equation}
        and denote \(\prooffun_{\inf}\in\kR\cup\{-\infty\}\) the latter infimum.
        By definition of an infimum, there exists a sequence \(\{\proofsequence{\idxsequence}\}_{\idxsequence\in\kN}\subset\dom(\prooffun)\) such that \(\prooffun_{\inf} = \lim_{\idxsequence\rightarrow+\infty} \prooffun(\proofsequence{\idxsequence})\).
        Let \(\varepsilon>0\) be such that \(\varepsilon<\prooffun(\pvi{0}) - \prooffun_{\inf}\).
        The hypothesis case~\eqref{eq:proof lemma infima=:infima in not +infty:case 2} therefore implies that there exists \(\idxsequence_{\varepsilon}\) such that
        \begin{equation}
            \forall\idxsequence\in\kN,\qquad
            \idxsequence\geq\idxsequence_\varepsilon \Longrightarrow
            \prooffun(\proofsequence{\idxsequence}) < \prooffun(\pvi{0}) - \varepsilon
        \end{equation}
        and therefore \(\proofsequence{\idxsequence}\neq\pvi{0}\) for all \(\idxsequence\geq\idxsequence_\varepsilon\).
        On can thus construct a subsequence \(\{\proofsubsequence{\idxsequence}\}_{\idxsequence\in\kN}\) such that 
        \(\proofsubsequence{\idxsequence}\neq\pvi{0}\) for all \(\idxsequence\in\kN\).
        This implies that \(\{\proofsubsequence{\idxsequence}\}_{\idxsequence\in\kN}\subset\dom(\prooffun)\setminus\{\pvi{0}\}\) and one immediately deduces that
        \begin{equation}
            \inf_{\pvi{}\in\dom(\prooffun)\setminus\{\pvi{0}\}}
            \leq
            \lim_{\idxsequence\to+\infty} \prooffun(\proofsubsequence{\idxsequence}) =
            \prooffun_{\inf}
        \end{equation}
        where the equality holds since \(\{\prooffun(\proofsubsequence{\idxsequence})\}_{\idxsequence\in\kN}\) is a subsequence of a converging sequence.
        This leads to~\eqref{eq:proof lemma infima=:target inequality}. 
    \end{itemize}
\end{proof}
}

We are now ready to give a proof to \Cref{prop:dual-relaxregfunc}. 
By definition of a convex conjugate function, we have: 
\begin{equation}
    \forall\vv \in \kR^{\pdim}:\ \conj{(\node{\regfunc})}(\vv) = \textstyle\sup_{\pv \in \kR^{\pdim}} \transp{\vv}\pv - \node{\regfunc}(\pv)
    .
\end{equation}
Observing from \eqref{eq:node-regfunc} that $\node{\regfunc}(\cdot)$ is separable since both $\regfunc(\cdot)$ and $\icvx(\cdot \in \node{\pset})$ are separable (see their definitions in~\eqref{eq:regfunc} and~\eqref{eq:region}, respectively), we deduce that $\conj{(\node{\regfunc})}(\cdot)$ is also separable and is given coordinate-wise by
\begin{align}
    \conj{(\separable{\node{\regfunc}}{\idxentry})}(\vvi{}) &= \textstyle\sup_{\pvi{} \in \kR} \vvi{}\pvi{} - \separable{\node{\regfunc}}{\idxentry}(\pvi{})
    .
\end{align}
Imposing explicitly the constraints defined in $\node{\pset}$, we obtain: 
\begin{align}
    \conj{(\separable{\node{\regfunc}}{\idxentry})}(\vvi{}) &= 
    \begin{cases}
        \textstyle\sup_{\pvi{} = 0} \, \vvi{}\pvi{} - \pertfunc(\pvi{}) - \reg\norm{\pvi{}}{0} &\text{if} \ \idxentry \in \setzero \\
        \textstyle\sup_{\pvi{} \neq 0} \, \vvi{}\pvi{} - \pertfunc(\pvi{}) - \reg\norm{\pvi{}}{0} &\text{if} \ \idxentry \in \setone \\
        \textstyle\sup_{\pvi{} \in \kR} \vvi{}\pvi{} - \pertfunc(\pvi{}) - \reg\norm{\pvi{}}{0} &\text{if} \ \idxentry \in \setnone
    \end{cases} 
    \label{eq:proof:dual-relaxregfunc:2}
\end{align}
We next address the three above cases separately.

If \(\idxentry \in \setzero\), the first case in~\eqref{eq:proof:dual-relaxregfunc:2} simplifies to
\begin{equation} \label{eq:proof:dual-relaxregfunc:3}
    \conj{(\separable{\node{\regfunc}}{\idxentry})}(\vvi{}) = \textstyle\sup_{\pvi{} = 0} \, \vvi{}\pvi{} - \pertfunc(\pvi{}) = 0
\end{equation}
where the first equality holds since \(\norm{0}{0}=0\) and the second since $\pertfunc(0)=0$ in virtue of hypothesis~\ref{hyp:zero-minimized}.

If $\idxentry \in \setone$, then
\begin{align}
    \label{eq:proof:dual-relaxregfunc:4}
    \conj{(\separable{\node{\regfunc}}{\idxentry})}(\vvi{}) &= \textstyle\sup_{\pvi{} \neq 0} \ \, \vvi{}\pvi{} - \pertfunc(\pvi{}) - \reg 
\end{align}
since \(\norm{\pvi{}}{0}=1\) for all \(\pvi{}\neq0\). 
As $\pertfunc(\cdot)$ is closed, convex and proper, the function $\prooffun(\pvi{})\triangleq -\vvi{}\pvi{} + \pertfunc(\pvi{}) + \reg$ inherits from these properties. On the one hand, it is easy to see that $\dom (\prooffun)=\dom(\pertfunc)$.
On the other hand, since $0$ is an accumulation point of $\dom(\pertfunc)$ from \ref{hyp:0-in-intdom}, we have from \Cref{lemma:infima=} that
\begin{align}
    \conj{(\separable{\node{\regfunc}}{\idxentry})}(\vvi{}) 
    \label{eq:proof:dual-relaxregfunc:5}
    &= \textstyle\sup_{\pvi{} \in \kR} \, \vvi{}\pvi{} - \pertfunc(\pvi{}) - \reg. 
\end{align}
We finally obtain the result by using the definition of the convex conjugate of $\pertfunc$. 

If $\idxentry \in \setnone$, then
\begin{subequations}
    \begin{alignat}{4}
        \conj{(\separable{\node{\regfunc}}{\idxentry})}(\vvi{}) &= \textstyle\sup_{\pvi{} \in \kR} \, \vvi{}\pvi{} - \pertfunc(\pvi{})  - \reg\norm{\pvi{}}{0} \\
        &= \max\big\{
            \textstyle\sup_{\pvi{} = 0} \, \vvi{}\pvi{} - \pertfunc(\pvi{}) - \reg\norm{\pvi{}}{0} \ &&, \
            \textstyle\sup_{\pvi{} \neq 0} \, \vvi{}\pvi{} - \pertfunc(\pvi{}) - \reg\norm{\pvi{}}{0}
        &&\big\} \\
        \label{eq:proof:dual-relaxregfunc:7}
        &= \max\big\{
            \textstyle\sup_{\pvi{} = 0} \, \vvi{}\pvi{} - \pertfunc(\pvi{}) \ &&, \
            \textstyle\sup_{\pvi{} \neq 0} \, \vvi{}\pvi{} - \pertfunc(\pvi{}) - \reg
        &&\big\} \\
        \label{eq:proof:dual-relaxregfunc:8}
        &= \max \left\{0, \conj{\pertfunc}(\vvi{}) - \reg \right\} \\
        \label{eq:proof:dual-relaxregfunc:9}
        &= \pospart{\conj{\pertfunc}(\vvi{}) - \reg}
    \end{alignat}
\end{subequations}
where \eqref{eq:proof:dual-relaxregfunc:7} is obtained by definition of the $\ell_0$-norm, \eqref{eq:proof:dual-relaxregfunc:8} follows from the same reasoning as for the case ``$\idxentry\in\setone$''. 


\subsection{Proof of \texorpdfstring{\Cref{prop:dual-link}}{Proposition~\ref{prop:dual-link}}} \label{sec:proof:dual-link}

{
    \newcommand{\lemmaNodeSymb}{\nodeSymb}
    \newcommand{\lemmaSetzero}{\setzero}
    \newcommand{\lemmaSetone}{\setone}
    \newcommand{\lemmaSetnone}{\setnone}

    \newcommand{\proofChildNodeSymb}{\nodeSymb'}
    \newcommand{\proofChildSetzero}{\setzero'}
    \newcommand{\proofChildSetone}{\setone'}
    \newcommand{\proofChildSetnone}{\setnone'}

    \newcommand{\subnodedepth}{k}
    \newcommand{\veryspecialentry}{\idxentry_0}

Our proof of \Cref{prop:dual-link} leverages the following relation between the dual functions at a node and its direct successors: 
\begin{lemma} \label{lemma:dual-link direct successor}
    Let \(\nodeSymb = (\setzero,\setone,\setnone)\) and \(\nodeSymb' = (\setzero',\setone',\setnone')\) be two nodes of the \gls{bnb} tree.

    If \(\nodeSymb'\) is a direct successor of \(\nodeSymb\), then for all \(\dv\in\kR^\ddim\): 
    \begin{equation} \label{eq:lemma:dual-link direct successor}
        \dfunc^{\nodeSymb'}(\dv)
        =
        \node{\dfunc}(\dv)
        +
        \begin{cases}
            \pivot{0}(\transp{\atom{\idxentry}}\dv) &\text{if} \ \idxentry \in \setzero' \\
            \pivot{1}(\transp{\atom{\idxentry}}\dv) &\text{if} \ \idxentry \in \setone'
        \end{cases}
    \end{equation}
    where \(\idxentry\) denotes the unique element of \((\setzero' \setminus \setzero) \cup (\setone' \setminus \setone)\) defined in \eqref{eq:direct-successor}.
\end{lemma}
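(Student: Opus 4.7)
The plan is to reduce the claim to computing the difference of the two dual objectives at \(\dv\), then exploit the coordinate-wise formula for \(\conj{(\node{\regfunc})}(\cdot)\) given by Proposition~\ref{prop:dual-relaxregfunc}.

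First, from the definition of \(\dfunc^{\nodeSymb}\) in \eqref{prob:dual-node}, the loss-dependent term \(-\conj{\lossfunc}(-\dv)\) is identical in \(\dfunc^{\nodeSymb'}(\dv)\) and \(\node{\dfunc}(\dv)\), so it cancels and one is left with
\[
\dfunc^{\nodeSymb'}(\dv) - \node{\dfunc}(\dv)
=
\conj{(\node{\regfunc})}(\transp{\dic}\dv) - \conj{(\regfunc^{\nodeSymb'})}(\transp{\dic}\dv).
\]
By the separability stated in Proposition~\ref{prop:dual-relaxregfunc} (which applies under \ref{hyp:pertfunc}--\ref{hyp:zero-minimized}), this rewrites as a sum of coordinate-wise differences indexed by \(\idxentry\in\intervint{1}{\pdim}\), each evaluated at \(\vvi{}\triangleq\transp{\atom{\idxentry}}\dv\).

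Next, since \(\nodeSymb'\) is a direct successor of \(\nodeSymb\), \eqref{eq:direct-successor} guarantees that \(\idxentry\) is the unique index whose set membership differs between \(\nodeSymb\) and \(\nodeSymb'\); for every other coordinate, the corresponding terms coincide and the difference vanishes. The claim thus reduces to computing the single remaining coordinate-wise difference, and two cases must be examined.

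If \(\idxentry\in\setzero'\), then originally \(\idxentry\in\setnone\), so by \eqref{eq:dual-relaxregfunc} the coordinate-wise conjugates are \(\pospart{\conj{\pertfunc}(\vvi{}) - \reg}\) at \(\nodeSymb\) and \(0\) at \(\nodeSymb'\). Their difference is exactly \(\pivot{0}(\vvi{})\) defined in~\eqref{eq:pivot-zero}. If \(\idxentry\in\setone'\), then again \(\idxentry\in\setnone\) originally, and the coordinate-wise conjugates become \(\pospart{\conj{\pertfunc}(\vvi{}) - \reg}\) at \(\nodeSymb\) and \(\conj{\pertfunc}(\vvi{}) - \reg\) at \(\nodeSymb'\). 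Using the elementary identity \(\pospart{a} - a = \pospart{-a}\) with \(a = \conj{\pertfunc}(\vvi{}) - \reg\), this difference equals \(\pospart{\reg - \conj{\pertfunc}(\vvi{})} = \pivot{1}(\vvi{})\) as defined in~\eqref{eq:pivot-one}. Combining both cases yields~\eqref{eq:lemma:dual-link direct successor}.

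I expect no substantive obstacle here: the entire proof is a bookkeeping exercise on the closed-form expression of Proposition~\ref{prop:dual-relaxregfunc}, with the only mildly clever step being the identity \(\pospart{a}-a=\pospart{-a}\) that converts the \(\setone'\)-case difference into \(\pivot{1}\). The full Proposition~\ref{prop:dual-link} then follows by iterating this lemma along any chain of direct successors connecting \(\nodeSymb\) to \(\nodeSymb'\), since each step contributes exactly one \(\pivot{0}\) or \(\pivot{1}\) term and these accumulate into the sums appearing in~\eqref{eq:dual-function}.
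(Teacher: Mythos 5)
Your proof is correct and follows essentially the same route as the paper's: both cancel the loss term, reduce the difference of dual objectives to the single coordinate where the partitions differ (using the separable conjugate formula of \Cref{prop:dual-relaxregfunc}), and dispatch the \(\setone'\) case with the identity \(\pospart{a}-a=\pospart{-a}\), which the paper states as \(\pvi{}=\pospart{\pvi{}}-\pospart{-\pvi{}}\). The only difference is cosmetic: the paper re-expands the full sums as in~\eqref{eq:proof:expansion dual function} and cancels terms explicitly, whereas you cancel the common loss term and the untouched coordinates upfront.
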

The proof of this result is postponed to the end of the section.

\begin{proof}[Proof of \Cref{prop:dual-link}]
Let \(\nodeSymb = (\setzero,\setone,\setnone)\) and \(\dv\in\kR^\ddim\).
We show that~\eqref{eq:dual-function} is true by induction on the cardinality of \(\setnone\setminus\setnone'\) where \(\setnone'\) denotes the third element in the partition of a successor \(\nodeSymb'\) of \(\nodeSymb\).
More specifically, we show that
\begin{center}
    \(\forall\subnodedepth\in\intervint{0}{\card{\setnone}}\): ``For all successor node \(\proofChildNodeSymb=(\proofChildSetzero,\proofChildSetone,\proofChildSetnone)\) such that \(\card{\setnone\setminus\proofChildSetnone}=\subnodedepth\),~\eqref{eq:dual-function} holds true''.
 \end{center} 

\textit{Initialization.}
If \(\subnodedepth=0\), the only successor \(\proofChildNodeSymb\) of \(\nodeSymb\) satisfying \(\card{\setnone\setminus\proofChildSetnone}=\subnodedepth\) is \(\proofChildNodeSymb=\nodeSymb\).
In that case, \(\proofChildSetone\setminus\setone=\emptyset\) and \(\proofChildSetzero\setminus\setzero=\emptyset\) so that~\eqref{eq:dual-function} trivially holds. 

\textit{Induction.}
Let \(\subnodedepth\in\intervint{0}{\card{\setnone}-1}\) and assume that our induction hypothesis holds for \(\subnodedepth\).
Let also \(\proofChildNodeSymb=(\proofChildSetzero,\proofChildSetone,\proofChildSetnone)\) be a successor of \(\nodeSymb\) such that \(\card{\setnone\setminus\proofChildSetnone}=\subnodedepth+1\).
Since \(\setnone\setminus\proofChildSetnone\neq\emptyset\), we can choose \(\veryspecialentry\in\setnone\setminus\proofChildSetnone\) and define
\begin{equation}
    \nodeSymb^{\veryspecialentry}
    =
    (\proofChildSetzero\setminus\{\veryspecialentry\},\proofChildSetone\setminus\{\veryspecialentry\},\proofChildSetnone\cup\{\veryspecialentry\})
    .
\end{equation}
On the one hand, the definition of \(\nodeSymb^{\veryspecialentry}\) implies that \(\nodeSymb^{\veryspecialentry}\) is a successor of \(\nodeSymb\).
Hence, our induction hypothesis applied to \(\nodeSymb^{\veryspecialentry}\) leads to
\begin{equation} \label{eq:proof dual link:induction:intermediary expansion}
    \dfunc^{\nodeSymb^{\veryspecialentry}}(\dv) 
    = 
    \node{\dfunc}(\dv) 
    + 
    \sum_{\idxentry \in \setzero'\setminus(\setzero\cup\{\veryspecialentry\})} \pivot{0}(\transp{\atom{\idxentry}}\dv)
    +  \sum_{\idxentry \in \setone'\setminus(\setone\cup\{\veryspecialentry\})} \pivot{1}(\transp{\atom{\idxentry}}\dv)
    .
\end{equation}
On other hand, the definition of \(\nodeSymb^{\veryspecialentry}\) also implies that \(\proofChildNodeSymb\) is a direct successor of \(\nodeSymb^{\veryspecialentry}\).
Applying \Cref{lemma:dual-link direct successor} thus leads to
\begin{equation} \label{eq:proof dual link:induction:direct parent expansion}
    \dfunc^{\proofChildNodeSymb}(\dv)
    =
    \dfunc^{\nodeSymb^{\veryspecialentry}}(\dv) 
    +
    \begin{cases}
        \pivot{0}(\transp{\atom{\veryspecialentry}}\dv) &\text{if} \ \veryspecialentry \in \proofChildSetzero \\
        \pivot{1}(\transp{\atom{\veryspecialentry}}\dv) \qquad \ \ \ \, &\text{if} \ \veryspecialentry \in \proofChildSetone.
    \end{cases}
\end{equation}
One finally obtains~\eqref{eq:dual-function}  by expanding \(\dfunc^{\nodeSymb^{\veryspecialentry}}(\dv) \) in~\eqref{eq:proof dual link:induction:direct parent expansion} using the result of~\eqref{eq:proof dual link:induction:intermediary expansion} and noting that \(\{\veryspecialentry\}\cup\setzero'\setminus(\setzero\cup\{\veryspecialentry\}) = \setzero'\setminus\setzero\) and \(\{\veryspecialentry\}\cup\setone'\setminus(\setone\cup\{\veryspecialentry\}) = \setone'\setminus\setone\). 
Since this rationale holds irrespective of the successor $\proofChildNodeSymb$, we conclude that the induction hypothesis also holds for $\subnodedepth+1$, thereby completing the proof.
\end{proof}

\begin{proof}[Proof of \Cref{lemma:dual-link direct successor}] 
    We first expand the definition of the function associated to the dual problem at a given node (see~\eqref{prob:dual-node}).
    More specifically, we have for any node \(\lemmaNodeSymb=(\lemmaSetzero,\lemmaSetone,\lemmaSetnone)\) and \(\dv \in \kR^{\ddim}\):
    \begin{align}
        \dfunc^{\lemmaNodeSymb}(\dv) &= -\conj{\lossfunc}(-\dv) - \conj{(\regfunc^{\lemmaNodeSymb})}(\dv) 
        \nonumber \\
        &= -\conj{\lossfunc}(-\dv) - \sum_{j \in \lemmaSetone} (\conj{\pertfunc}(\transp{\atom{j}}\dv) - \reg) - \sum_{j \in \lemmaSetnone} \pospart{\conj{\pertfunc}(\transp{\atom{j}}\dv) - \reg}
        \label{eq:proof:expansion dual function}
    \end{align}
    where the first equality holds by definition and the second follows from \Cref{prop:dual-relaxregfunc}.

    Let \(\nodeSymb = (\setzero,\setone,\setnone)\) be such that \(\setnone\neq\emptyset\) and let \(\nodeSymb' = (\setzero',\setone',\setnone')\) be a direct successor of \(\nodeSymb\). Then, there exists \(\idxentry\in\setnone\) such that one of the following situation holds:
    \begin{enumerate}[label={\itshape S-\arabic*)}]
        \item \((\setzero', \setone', \setnone') = (\setzero \cup\{\idxentry\}, \setone, \setnone\setminus\{\idxentry\})\)
        \label{item:proof:dual function decomposition: case 1}
        \item \((\setzero', \setone', \setnone') = (\setzero, \setone\cup\{\idxentry\}, \setnone\setminus\{\idxentry\})\).
        \label{item:proof:dual function decomposition: case 2}
    \end{enumerate}
    Applying~\eqref{eq:proof:expansion dual function} to $\nodeSymb'$, we first have for any \(\dv \in \kR^{\ddim}\): 
    {
        \newlength\sumd
        \settowidth{\sumd}{\(\scriptstyle j \in \setnone\setminus\{\idxentry\}\)}
        \begin{align} 
            \dfunc^{\nodeSymb'}(\dv) 
            \,=\,& -\conj{\lossfunc}(-\dv) - \sum_{j \in \setone'} (\conj{\pertfunc}(\transp{\atom{j}}\dv) - \reg) - \sum\limits_{\makebox[\sumd]{\(\scriptstyle j \in \setnone'\)}} \pospart{\conj{\pertfunc}(\transp{\atom{j}}\dv) - \reg} 
            \nonumber \\
            \,=\,& -\conj{\lossfunc}(-\dv) - \sum_{j \in \setone'} (\conj{\pertfunc}(\transp{\atom{j}}\dv) - \reg) - {\sum_{j \in \setnone\setminus\{\idxentry\}}} \pospart{\conj{\pertfunc}(\transp{\atom{j}}\dv) - \reg} 
            \nonumber \\
            \,=\,& -\conj{\lossfunc}(-\dv) - \sum_{j \in \setone'} (\conj{\pertfunc}(\transp{\atom{j}}\dv) - \reg) - \sum\limits_{\makebox[\sumd]{\(\scriptstyle j \in \setnone\)}} \pospart{\conj{\pertfunc}(\transp{\atom{j}}\dv) - \reg} 
            + \pospart{\conj{\pertfunc}(\transp{\atom{\idxentry}}\dv) - \reg}
            \label{eq:proof:dual-link:1:1}
        \end{align}
        where we have used the fact that \(\setnone'=\setnone\setminus\{\idxentry\}\) in both cases~\ref{item:proof:dual function decomposition: case 1} and~\ref{item:proof:dual function decomposition: case 2} to obtain the second equality.
        We now address the two cases separately.
    }

    In case~\ref{item:proof:dual function decomposition: case 1}, we have \(\setone'=\setone\) and~\eqref{eq:proof:dual-link:1:1} becomes
    \begin{align}
        \dfunc^{\nodeSymb'}(\dv) 
        &\,=\, -\conj{\lossfunc}(-\dv) - \sum_{j \in \setone} (\conj{\pertfunc}(\transp{\atom{j}}\dv) - \reg) - \sum\limits_{\makebox[\sumd]{\(\scriptstyle j \in \setnone\)}} \pospart{\conj{\pertfunc}(\transp{\atom{j}}\dv) - \reg} 
        + \pospart{\conj{\pertfunc}(\transp{\atom{\idxentry}}\dv) - \reg}
        \nonumber \\
        &\,=\, \dfunc^{\nodeSymb}(\dv)  + \pospart{\conj{\pertfunc}(\transp{\atom{\idxentry}}\dv) - \reg}
        \label{eq:proof:dual-link:1:3}
    \end{align}
    where we have applied~\eqref{eq:proof:expansion dual function} to obtain the second equality.

    In case~\ref{item:proof:dual function decomposition: case 2}, we have \(\setone'=\setone\cup\{\idxentry\}\) and~\eqref{eq:proof:dual-link:1:1} becomes
    \begin{align}
        \dfunc^{\nodeSymb'}(\dv) 
        &\,=\, -\conj{\lossfunc}(-\dv) - \sum_{j \in \setone\cup\{\idxentry\}} (\conj{\pertfunc}(\transp{\atom{j}}\dv) - \reg) - \sum\limits_{\makebox[\sumd]{\(\scriptstyle j \in \setnone\)}} \pospart{\conj{\pertfunc}(\transp{\atom{j}}\dv) - \reg} 
        + \pospart{\conj{\pertfunc}(\transp{\atom{\idxentry}}\dv) - \reg}
        \nonumber \\
        &\,=\, -\conj{\lossfunc}(-\dv) - \sum_{j \in \setone} (\conj{\pertfunc}(\transp{\atom{j}}\dv) - \reg) - \sum\limits_{\makebox[\sumd]{\(\scriptstyle j \in \setnone\)}} \pospart{\conj{\pertfunc}(\transp{\atom{j}}\dv) - \reg} 
        + \pospart{\conj{\pertfunc}(\transp{\atom{\idxentry}}\dv) - \reg} - (\conj{\pertfunc}(\transp{\atom{\idxentry}}\dv) - \reg)
        \nonumber \\
        &\,=\, \dfunc^{\nodeSymb}(\dv) + \pospart{\conj{\pertfunc}(\transp{\atom{\idxentry}}\dv) - \reg} - (\conj{\pertfunc}(\transp{\atom{\idxentry}}\dv) - \reg)
        \nonumber \\
        &\,=\, \dfunc^{\nodeSymb}(\dv) + \pospart{\reg - \conj{\pertfunc}(\transp{\atom{\idxentry}}\dv)}
        \label{eq:proof:dual-link:2:3}
    \end{align}
    where the last two equalities follow respectively from~\eqref{eq:proof:expansion dual function} and the property $\pvi{} = \pospart{\pvi{}} - \pospart{-\pvi{}}$ for all $\pvi{} \in \kR$.
    
    Gathering the results given in~\eqref{eq:proof:dual-link:1:3}-\eqref{eq:proof:dual-link:2:3} and using the definition of \(\pivot{0}\) and \(\pivot{1}\) in~\eqref{eq:pivot-zero}-\eqref{eq:pivot-one}, one obtains that \(\dfunc^{\nodeSymb'}(\dv)\) satisfies~\eqref{eq:lemma:dual-link direct successor}. 
\end{proof}
}

\subsection{Proof of \texorpdfstring{\Cref{prop:propagation-tests}}{Proposition~\ref{prop:propagation-tests}}}
\label{sec:proof:propagation-tests}

{
\newcommand{\binarynumber}{b}

Let \(\nodeSymb'=(\setzero',\setone',\setnone')\) be a successor of \(\nodeSymb=(\setzero,\setone,\setnone)\) and \(\dv\in\kR^\ddim\).
\Cref{prop:propagation-tests} is a direct consequence of the following two inequalities
\begin{subequations}
    \begin{align}
        \dfunc^{\nodeSymb'_{0,\idxentry}}(\dv) \,\geq\,& \dfunc^{\succnode{0}{\idxentry}}(\dv) \label{eq:proof propagation test:corner inequality a}\\
        \dfunc^{\nodeSymb'_{1,\idxentry}}(\dv) \,\geq\,& \dfunc^{\succnode{1}{\idxentry}}(\dv) \label{eq:proof propagation test:corner inequality b}.
    \end{align}
\end{subequations}
We thus establish~\eqref{eq:proof propagation test:corner inequality a} and~\eqref{eq:proof propagation test:corner inequality b} in the remaining of the section.

Note first that a direct consequence of \Cref{prop:dual-link} is 
\begin{equation}\label{eq:increase-dual-fun}
    \dfunc^{\nodeSymb'}(\dv)\geq \dfunc^{\nodeSymb}(\dv)
\end{equation}
since all terms \(\{\pivot{0}(\transp{\atom{\idxentry}}\dv)\}_{\idxentry\in\setzero'\setminus \setzero}\) and \(\{\pivot{1}(\transp{\atom{\idxentry}}\dv)\}_{\idxentry\in\setone'\setminus \setone}\) are nonnegative. 

Let \(\binarynumber\in\{0, 1\}\).
Particularizing \Cref{lemma:dual-link direct successor} to \(\nodeSymb=\nodeSymb'\) and $\nodeSymb'=\nodeSymb'_{\binarynumber,\idxentry}$ --the direct successor of $\nodeSymb'$ defined in \eqref{eq:direct-zero}-- one obtains:  
\begin{subequations}
    \begin{alignat}{4}
        \label{eq:proof:propagation-tests:1:1}
        \dfunc^{\nodeSymb'_{\binarynumber,\idxentry}}(\dv) 
        &= \dfunc^{\nodeSymb'}(\dv) &+ \ 
        {\pivot{\binarynumber}(\transp{\atom{\idxentry}}\dv)}
        \\
        &\geq \dfunc^{\nodeSymb}(\dv) &+ \ 
        {\pivot{\binarynumber}(\transp{\atom{\idxentry}}\dv)}
        \\
        \label{eq:proof:propagation-tests:1:2}
        &=\dfunc^{\succnode{\binarynumber}{\idxentry}}(\dv)
    \end{alignat}
\end{subequations}
where the inequality follow from~\eqref{eq:increase-dual-fun} (since $\nodeSymb'$ is a successor of $\nodeSymb$) and the last equality from \Cref{lemma:dual-link direct successor}. This establishes the result.  
}

\section{Supplementary materials related to \texorpdfstring{\Cref{sec:numerics}}{Section~\ref{sec:numerics}}}
\label{sec:supp_numerics}

This section gives supplementary materials to our numerical experiments.

\subsection{Mixed-integer Programming Formulations}
\label{sec:supp_numerics:mip}

In our experiments, problem \eqref{prob:prob} is formulated as a \gls{mip} so that it can be handled by commercial solvers like \texttt{Cplex}, \texttt{Gurobi} and \texttt{Mosek}.
For the problem considered in \Cref{sec:numerics:synthetic} where $\lossfunc(\cdot)$ and $\pertfunc(\cdot)$ are given by \eqref{eq:leastsquares}-\eqref{eq:bigm}, we use the following formulation
\begin{equation}
    \left\{
        \begin{array}{rl}
            \min & \tfrac{1}{2}\norm{\obs-\dic\pv}{2}^2 + \reg\transp{\1}\bv \\
            \text{s.t.} & -\bigM \bv \leq \pv \leq \bigM \bv \\
            & \pv \in \kR^{\pdim}, \ \bv \in \{0,1\}^{\pdim}
        \end{array}
    \right.
\end{equation}
where an additional binary variable $\bv \in \{0,1\}^{\pdim}$ is used to encode the nullity of the entries of the continuous variable $\pv \in \kR^{\pdim}$.
A similar approach can be used to reformulate the problems treated in \Cref{sec:numerics:realworld}.
We refer the reader to \Cref{table:mip} for the \gls{mip} formulation of each of the problems considered in our numerical simulations.

\begin{table}[!t]
    \centering
    \begin{tabular*}{0.8\linewidth}{c|l|l}
        \toprule
        & \multicolumn{1}{c|}{Penalty \eqref{eq:bigml1}} & \multicolumn{1}{c}{Penalty \eqref{eq:bigml2}} \\
        \midrule
        \rotatebox[origin=c]{90}{Loss \eqref{eq:leastsquares-loss}}
        & 
        $
        \left\{
            \begin{array}{rl}
                \min & \tfrac{1}{2}\norm{\obs-\dic\pv}{2}^2 + \reg\transp{\1}\bv + \alpha\transp{\1}\mathbf{s} \\
                \text{s.t.} & \pv \geq -\mathbf{s} \\
                & \pv \leq \mathbf{s} \\
                & \pv \geq -\bigM \bv \\
                & \pv \leq \bigM \bv \\
                & \pv \in \kR^{\pdim}, \ \bv \in \{0,1\}^{\pdim}, \ \mathbf{s} \in \kR^{\pdim}
            \end{array}
        \right.
        $
        & 
        $
        \left\{
            \begin{array}{rl}
                \min & \tfrac{1}{2}\norm{\obs-\dic\pv}{2}^2 + \reg\transp{\1}\bv + \alpha\transp{\1}\mathbf{s} \\
                \text{s.t.} & \pv\odot\pv \leq \mathbf{s}\odot\bv \\
                & \pv \geq -\bigM \bv \\ 
                & \pv \leq \bigM \bv \\
                & \pv \in \kR^{\pdim}, \ \bv \in \{0,1\}^{\pdim}, \ \mathbf{s} \in \kR^{\pdim}
            \end{array}
        \right.
        $
        \\
        \midrule
        \rotatebox[origin=c]{90}{Loss \eqref{eq:logistic-loss}} &
        $
        \left\{
            \begin{array}{rll}
                \min & \transp{\1}\mathbf{u} + \reg\transp{\1}\bv + \alpha\transp{\1}\mathbf{s} \\
                \text{s.t.} & \1 \geq \mathbf{v} + \mathbf{w} \\
                & \mathbf{u} \geq -\log(\mathbf{v}) + \obs \odot \dic\pv \\
                & \mathbf{u} \geq -\log(\mathbf{w}) \\
                & \pv \geq -\mathbf{s} \\
                & \pv \leq \mathbf{s} \\
                & \pv \geq -\bigM \bv \\
                & \pv \leq \bigM \bv \\
                & \pv \in \kR^{\pdim}, \ \bv \in \{0,1\}^{\pdim}, \ \mathbf{s} \in \kR^{\pdim} \\
                & \mathbf{u} \in \kR^{\ddim}, \ \mathbf{v} \in \kR^{\ddim}, \ \mathbf{w} \in \kR^{\ddim}
            \end{array}
        \right.
        $
        & 
        $
        \left\{
            \begin{array}{rll}
                \min & \transp{\1}\mathbf{u} + \reg\transp{\1}\bv + \alpha\transp{\1}\mathbf{s} \\
                \text{s.t.} & \mathbf{v} + \mathbf{w} \leq \1 \\
                & \mathbf{u} \geq -\log(\mathbf{v}) + \obs \odot \dic\pv \\
                & \mathbf{u} \geq -\log(\mathbf{w}) \\
                & \pv \odot \pv \leq \mathbf{s} \odot \bv \\
                & -\bigM \bv \leq \pv \leq \bigM \bv \\
                & \pv \in \kR^{\pdim}, \ \bv \in \{0,1\}^{\pdim}, \ \mathbf{s} \in \kR^{\pdim} \\
                & \mathbf{u} \in \kR^{\ddim}, \ \mathbf{v} \in \kR^{\ddim}, \ \mathbf{w} \in \kR^{\ddim}
            \end{array}
        \right.
        $
        \\
        \midrule
        \rotatebox[origin=c]{90}{Loss \eqref{eq:squaredhinge-loss}} & 
        $
        \left\{
            \begin{array}{rll}
                \min & \norm{\mathbf{w}}{2}^2 + \reg\transp{\1}\bv + \alpha\transp{\1}\mathbf{s} \\
                \text{s.t.} & \mathbf{w} \geq \1 - \obs \odot \dic\pv \\
                & \mathbf{w} \geq \0 \\
                & \pv \geq -\mathbf{s} \\
                & \pv \leq \mathbf{s} \\
                & \pv \geq -\bigM \bv \\
                & \pv \leq \bigM \bv \\
                & \pv \in \kR^{\pdim}, \ \bv \in \{0,1\}^{\pdim} \\
                & \mathbf{s} \in \kR^{\pdim}, \mathbf{w} \in \kR^{\pdim}
            \end{array}
        \right.
        $
        & 
        $
        \left\{
            \begin{array}{rll}
                \min & \norm{\mathbf{w}}{2}^2 + \reg\transp{\1}\bv + \alpha\transp{\1}\mathbf{s} \\
                \text{s.t.} & \mathbf{w} \geq \1 - \obs \odot \dic\pv \\
                & \mathbf{w} \geq \0 \\
                & \pv \odot \pv \leq \mathbf{s} \odot \bv  \\
                & \pv \geq -\bigM \bv \\
                & \pv \leq \bigM \bv \\
                & \pv \in \kR^{\pdim}, \ \bv \in \{0,1\}^{\pdim} \\
                & \mathbf{s} \in \kR^{\pdim}, \mathbf{w} \in \kR^{\pdim}
            \end{array}
        \right.
        $
        \\
        \bottomrule
    \end{tabular*}
    \caption{MIP formulations used Section 4. The vectorial inequalities as well as the function $\log(\cdot)$ are taken component-wise and $\odot$ denotes the Hadamard product.}
    \label{table:mip}
\end{table}

\subsection{Implementation Choices}
\label{sec:supp_numerics:implementation_choices}

Our \gls{bnb} solver follows the standard implementation specified in \Cref{sec:bnb} and explores the tree in a ``depth-first'' fashion, as presented in Sec.~5.2.2 by \cite{locatelli2013global}. 
Given some node $\nodeSymb=(\setzero,\setone,\setnone)$ where the decision tree must be expanded, we select an index $\idxentry \in \setnone$ to create new nodes \eqref{eq:direct-zero}-\eqref{eq:direct-one} as
\begin{equation}
    \label{eq:node-selection}
    \idxentry \in \argmax_{\idxentry' \in \setnone} \abs{\node{\pvi{\idxentry'}}}
    .
\end{equation}
where $\node{\pv}$ is the final iterate of the numerical procedure addressing \eqref{prob:relax-node}. 
We use an approach similar to that considered in \cite{hazimeh2022sparse} to solve numerically problem \eqref{prob:relax-node}. 
More precisely, we solve a sequence of sub-problems defined as
\begin{equation}
    \stepcounter{equation}
    \tag{$\node{\rpb}_{\mathcal{W}}$}
    \label{prob:relax-node-subproblem}
    \left\{
    \begin{array}{rl}
        \inf &\lossfunc(\dic\pv) + 
        \biconj{(\node{\regfunc})}(\pv) 
        \\
        \text{s.t.} & \pvi{\idxentry} = 0 \quad \forall \idxentry \notin \mathcal{W}
    \end{array}
    \right.
\end{equation}
where $\mathcal{W} \subseteq \intervint{1}{\pdim}$ is some  working set.
Each sub-problem \eqref{prob:relax-node-subproblem} is solved by using a classical coordinate descent procedure. 
The size of the working set is $\mathcal{W}$ increased based on Fermat's optimality condition violation until no more violations occur.
More specifically, letting $\pv^{\mathcal{W}} \in \kR^{\pdim}$ be the output of the numerical procedure addressing \eqref{prob:relax-node-subproblem}, we let $\mathcal{W}_{\mathrm{new}} = \mathcal{W} \cup \kset{\idxentry}{0 \notin \transp{\atom{\idxentry}}\subdiff\lossfunc(\dic\pv^{\mathcal{W}}) + \subdiff{\biconj{(\node{\separable{\regfunc}{\idxentry}})}}(\pvi{\idxentry}^{\mathcal{W}})}$ and stop the procedure when $\mathcal{W}_{\mathrm{new}} = \mathcal{W}$.

The simultaneous pruning procedure proposed in \Cref{sec:screening} is performed during the resolution of problems \eqref{prob:relax-node}.  
In view of our discussion in \Cref{sec:screening:implementation}, we consider that one iteration of the solving process of \eqref{prob:relax-node} corresponds to the resolution of one sub-problem \eqref{prob:relax-node-subproblem}. 
Stated otherwise, the iterates $\hat{\pv} \in \kR^{\pdim}$ used to implement our pruning methodology in \eqref{eq:dual-point} are the output of the numerical procedure addressing each sub-problem \eqref{prob:relax-node-subproblem}.

\subsection{Technical Implementation Details}
\label{sec:supp_numerics:implementation}

Given some node $\nodeSymb = (\setzero, \setone, \setnone)$, the \gls{bnb} algorithm requires characterizing the convex biconjugate $\biconj{(\node{\regfunc})}(\cdot)$ associated with $\node{\regfunc}(\cdot)$ to construct relaxation \eqref{prob:relax-node}.
We derive its expression from the parametrization of the convex conjugate $\conj{(\node{\regfunc})}(\cdot)$ given in \Cref{prop:dual-relaxregfunc}.
With equation \eqref{eq:dual-relaxregfunc}, we first observe that $\biconj{(\node{\regfunc})}(\pv)=\sum_{\idxentry=1}^{\pdim}\biconj{(\node{\separable{\regfunc}{\idxentry}})}(\pvi{\idxentry})$ where 
\begin{subequations}
    \begin{align}
        \label{eq:supp_numerics:implementation:1:1}
        \biconj{(\separable{\node{\regfunc}}{\idxentry})}(\pvi{}) &= 
        \begin{cases}
            \textstyle\sup_{\vvi{} \in \kR} \pvi{}\vvi{} - 0 &\text{if} \ \idxentry \in \setzero \\
            \textstyle\sup_{\vvi{} \in \kR} \pvi{}\vvi{} - (\conj{\pertfunc}(\vvi{}) - \reg) &\text{if} \ \idxentry \in \setone \\
            \textstyle\sup_{\vvi{} \in \kR} \pvi{}\vvi{} - \pospart{\conj{\pertfunc}(\vvi{}) - \reg} &\text{if} \ \idxentry \in \setnone
        \end{cases} \\
        \label{eq:supp_numerics:implementation:1:2}
        &= 
        \begin{cases}
            \icvx(\pvi{} = 0) &\text{if} \ \idxentry \in \setzero \\
            \biconj{\pertfunc}(\pvi{}) + \reg &\text{if} \ \idxentry \in \setone \\
            \textstyle\sup_{\vvi{} \in \kR} \pvi{}\vvi{} - \pospart{\conj{\pertfunc}(\vvi{}) - \reg} &\text{if} \ \idxentry \in \setnone
        \end{cases} \\
        \label{eq:supp_numerics:implementation:1:3}
        &= 
        \begin{cases}
            \icvx(\pvi{} = 0) &\text{if} \ \idxentry \in \setzero \\
            \pertfunc(\pvi{}) + \reg &\text{if} \ \idxentry \in \setone \\
            \textstyle\sup_{\vvi{} \in \kR} \pvi{}\vvi{} - \pospart{\conj{\pertfunc}(\vvi{}) - \reg} &\text{if} \ \idxentry \in \setnone
            .
        \end{cases}
    \end{align}
\end{subequations}
Equalities \eqref{eq:supp_numerics:implementation:1:1}-\eqref{eq:supp_numerics:implementation:1:2} follow from Definition~4.1 in \cite{beck2017first}.
Equality \eqref{eq:supp_numerics:implementation:1:3} follows from Theorem~4.8 in \cite{beck2017first} since $\pertfunc(\cdot)$ is proper, closed and convex under hypothesis \ref{hyp:pertfunc}.
We next provide a closed-form expression of the case $\idxentry \in \setnone$ in \eqref{eq:supp_numerics:implementation:1:3} for the three expressions of function $\pertfunc(\cdot)$ considered in our numerical experiments.

\paragraph{Function $\pertfunc(\cdot)$ given by \eqref{eq:bigm}}
With this parametrization, we have
\begin{subequations}
    \begin{align}
        \conj{\pertfunc}(\vvi{}) &= \textstyle\sup_{\pvi{} \in \kR} \vvi{}\pvi{} - \icvx(\abs{\pvi{}} \leq \bigM) \\
        &= \bigM\abs{\vvi{}}
        .
    \end{align}
\end{subequations}
Hence, we deduce that $\conj{\pertfunc}(\vvi{}) \leq \reg \iff \abs{\vvi{}} \leq \reg/\bigM$, which gives
\begin{subequations}
    \begin{alignat}{4}
        \label{eq:proof:relaxregfunc:5:1}
        \textstyle\sup_{\vvi{} \in \kR} \pvi{}\vvi{} - \pospart{\conj{\pertfunc}(\vvi{}) - \reg} &= \max\big\{
            \textstyle\sup_{\abs{\vvi{}} \leq \reg/\bigM} \pvi{}\vvi{} \ &&, \ 
            \textstyle\sup_{\abs{\vvi{}} \geq \reg/\bigM} \pvi{}\vvi{} - (\bigM\abs{\vvi{}} - \reg)
        &&\big\} \\
        \label{eq:proof:relaxregfunc:5:2}
        &= \max\big\{
            (\reg/\bigM)\abs{\pvi{}} 
            \ &&, \ 
            \textstyle\sup_{\abs{\vvi{}} \geq \reg/\bigM} \pvi{}\vvi{} - \bigM\abs{\vvi{}} + \reg
        &&\big\}
        .
    \end{alignat}
\end{subequations}
Further, we remark that the supremum in the right member of \eqref{eq:proof:relaxregfunc:5:2} lies in $\kset{\vvi{} \in \kR \cup \{+\infty\}}{\vvi{} \geq 0}$ when $\pvi{} \geq 0$ and in $\kset{\vvi{} \in \kR \cup \{-\infty\}}{\vvi{} \leq 0}$ when $\pvi{} \leq 0$.
Therefore, we obtain
\begin{subequations}
    \begin{alignat}{4}
        \label{eq:proof:relaxregfunc:5:3}
        \textstyle\sup_{\abs{\vvi{}} \geq \reg/\bigM} \pvi{}\vvi{} - \bigM\abs{\vvi{}} + \reg  &= \textstyle\sup_{\abs{\vvi{}} \geq \reg/\bigM} \abs{\vvi{}}(\abs{\pvi{}} - \bigM) + \reg \\
        &= \begin{cases}
            +\infty &\text{if} \ \abs{\pvi{}} > \bigM \\
            (\reg/\bigM)\abs{\pvi{}} &\text{if} \ \abs{\pvi{}} \leq \bigM
        \end{cases} \\
        \label{eq:proof:relaxregfunc:5:4}
        &= (\reg/\bigM)\abs{\pvi{}} + \icvx(\abs{\pvi{}} \leq \bigM)
        .
    \end{alignat}
\end{subequations}
Overall, combining \eqref{eq:proof:relaxregfunc:5:1}-\eqref{eq:proof:relaxregfunc:5:2} with \eqref{eq:proof:relaxregfunc:5:3}-\eqref{eq:proof:relaxregfunc:5:4} gives
\begin{subequations}
    \begin{align}
        \biconj{(\node{\separable{\regfunc}{\idxentry}})}(\pvi{}) &= \textstyle\sup_{\vvi{} \in \kR} \pvi{}\vvi{} - \pospart{\conj{\pertfunc}(\vvi{}) - \reg} \\
        &= \max\{(\reg/\bigM)\abs{\pvi{}}, \ (\reg/\bigM)\abs{\pvi{}} + \icvx(\abs{\pvi{}} \leq \bigM)\} \\
        &= (\reg/\bigM)\abs{\pvi{}} + \icvx(\abs{\pvi{}} \leq \bigM)
    \end{align}
\end{subequations}
when $\idxentry \in \setnone$ in \eqref{eq:supp_numerics:implementation:1:3}.

\paragraph{Function $\pertfunc(\cdot)$ given by \eqref{eq:bigml1}}
 
With this parametrization, we have
\begin{subequations}
    \begin{align}
        \conj{\pertfunc}(\vvi{}) &= \textstyle\sup_{\abs{\pvi{}} \leq \bigM} \vvi{}\pvi{} - \alpha\abs{\pvi{}} \\
        &= \textstyle\sup_{\abs{\pvi{}} \leq \bigM} \abs{\pvi{}}(\abs{\vvi{}} - \alpha) \\
        &= \begin{cases}
            0 &\text{if} \ \abs{\vvi{}} \leq \alpha \\
            \bigM(\abs{\vvi{}} - \alpha) &\text{if} \ \abs{\vvi{}} > \alpha
        \end{cases} \\
        \label{eq:proof:relaxregfunc:6}
        &= \bigM\pospart{\abs{\vvi{}} - \alpha}
        .
    \end{align}
\end{subequations}
Hence, we deduce that $\conj{\pertfunc}(\vvi{}) \leq \reg \iff \abs{\vvi{}} \leq \alpha+\reg/\bigM$, which gives
\begin{subequations}
    \begin{alignat}{4}
        \label{eq:proof:relaxregfunc:7:1a}
        \textstyle\sup_{\vvi{} \in \kR} \pvi{}\vvi{} - \pospart{\conj{\pertfunc}(\vvi{}) - \reg} 
        &= \max\big\{
            \textstyle\sup_{\abs{\vvi{}} \leq \alpha + \reg/\bigM} \pvi{}\vvi{} \ &&, \ 
            \textstyle\sup_{\abs{\vvi{}} \geq \alpha + \reg/\bigM} \pvi{}\vvi{} - \bigM\abs{\vvi{}} + \bigM\alpha + \reg
        &&\big\} \\
        \label{eq:proof:relaxregfunc:7:2}
        &= \max\big\{
            (\alpha + \reg/\bigM)\abs{\pvi{}} \ &&, \ 
            \textstyle\sup_{\abs{\vvi{}} \geq \alpha + \reg/\bigM} \pvi{}\vvi{} - \bigM\abs{\vvi{}} + \bigM\alpha + \reg
        &&\big\} 
    \end{alignat}
\end{subequations}
Further, we remark that the supremum of the right member in \eqref{eq:proof:relaxregfunc:7:2} lies in $\kset{\vvi{} \in \kR \cup \{+\infty\}}{\vvi{} \geq 0}$ when $\pvi{} \geq 0$ and in $\kset{\vvi{} \in \kR \cup \{-\infty\}}{\vvi{} \leq 0}$ when $\pvi{} \leq 0$.
Therefore, we obtain
\begin{subequations}
    \begin{align}
        \label{eq:proof:relaxregfunc:7:4}
        \textstyle\sup_{\abs{\vvi{}} \geq \alpha + \reg/\bigM} \pvi{}\vvi{} - \bigM\abs{\vvi{}} + \bigM\alpha + \reg &= \textstyle\sup_{\abs{\vvi{}} \geq \alpha + \reg/\bigM} \abs{\vvi{}}(\abs{\pvi{}} - \bigM) + \bigM\alpha + \reg \\
        &= 
        \begin{cases}
            +\infty &\text{if} \ \abs{\pvi{}} > \bigM \\
            (\alpha + \reg/\bigM)\abs{\pvi{}} &\text{if} \ \abs{\pvi{}} \leq \bigM
        \end{cases} \\
        \label{eq:proof:relaxregfunc:7:5}
        &= (\alpha + \reg/\bigM)\abs{\pvi{}} + \icvx(\abs{\pvi{}} \leq \bigM)
        .
    \end{align}
\end{subequations}
Overall, combining \eqref{eq:proof:relaxregfunc:7:1a}-\eqref{eq:proof:relaxregfunc:7:2} with \eqref{eq:proof:relaxregfunc:7:4}-\eqref{eq:proof:relaxregfunc:7:5} gives
\begin{subequations}
    \begin{align}
        \biconj{(\node{\separable{\regfunc}{\idxentry}})}(\pvi{}) &= \textstyle\sup_{\vvi{} \in \kR} \pvi{}\vvi{} - \pospart{\conj{\pertfunc}(\vvi{}) - \reg} \\
        &= \max\big\{
            (\alpha + \reg/\bigM)\abs{\pvi{}}, \ 
            (\alpha + \reg/\bigM)\abs{\pvi{}} + \icvx(\abs{\pvi{}} \leq \bigM)
        \big\} \\
        &= (\alpha + \reg/\bigM)\abs{\pvi{}} + \icvx(\abs{\pvi{}} \leq \bigM)
    \end{align}
\end{subequations}
when $\idxentry \in \setnone$ in \eqref{eq:supp_numerics:implementation:1:3}.

\paragraph{Function $\pertfunc(\cdot)$ given by \eqref{eq:bigml2}.}
With this parametrization, we obtain
\begin{subequations}
    \begin{align}
        \conj{\pertfunc}(\vvi{}) 
        &\triangleq \textstyle\sup_{\abs{\pvi{}} \leq \bigM} \vvi{}\pvi{} - \alpha\pvi{}^2 \\
        &= -\textstyle\inf_{\abs{\pvi{}} \leq \bigM} \alpha\pvi{}^2 - \vvi{}\pvi{} 
        \label{eq:proof:relaxregfunc:square abs + interval:buf conjugate} \\
        &= \begin{cases}
            \tfrac{\vvi{}^2}{4\alpha} &\text{if} \ \abs{\vvi{}} \leq 2\alpha\bigM \\
            \bigM\abs{\vvi{}} - \alpha\bigM^2 &\text{if} \ \abs{\vvi{}} > 2\alpha\bigM
        \end{cases}
    \end{align}
\end{subequations}
by noting that the scalar defined as the orthogonal projection   of \(\tfrac{\vvi{}}{2\alpha}\) onto the interval \([-\bigM, \bigM]\) satisfies the necessary and sufficient first order optimality condition (see \textit{e.g.} Corollary 3.68 in~\cite{beck2017first}) associated to the convex minimization problem involved in~\eqref{eq:proof:relaxregfunc:square abs + interval:buf conjugate}.
The function $\conj{\pertfunc}(\cdot)$ is continuous, monotone, minimized at $\vvi{} = 0$ and one has $\conj{\pertfunc}(\vvi{}) = \alpha\bigM^2$ at the threshold $\abs{\vvi{}} = 2\alpha\bigM$.
In this view, we deduce that
\begin{subequations}
    \begin{align}
        \conj{\pertfunc}(\vvi{}) \leq \reg &\iff 
        \begin{cases}
            \bigM\abs{\vvi{}} - \alpha\bigM^2 \leq \reg &\text{if} \ \alpha\bigM^2 \leq \reg  \\
            \tfrac{\vvi{}^2}{4\alpha} \leq \reg &\text{if} \ \alpha\bigM^2 > \reg
        \end{cases}
        \\
        \label{eq:proof:relaxregfunc:8}
        &\iff \begin{cases}
            \abs{\vvi{}} \leq \alpha\bigM + \reg/\bigM & \text{if} \ \bigM \leq \sqrt{\reg/\alpha} \\
            \abs{\vvi{}} \leq 2\sqrt{\reg\alpha} &\text{if} \ \bigM > \sqrt{\reg/\alpha}
            .
        \end{cases}
    \end{align}
\end{subequations}
By treating the two above cases separately, we next show that 
\begin{subequations}
    \begin{align}
        \biconj{(\node{\separable{\regfunc}{\idxentry}})}(\pvi{}) &= \textstyle\sup_{\vvi{} \in \kR} \pvi{}\vvi{} - \pospart{\conj{\pertfunc}(\vvi{}) - \reg} \\
        &=
        \begin{cases}
            \icvx(\abs{\pvi{}} \leq \bigM) + (\tfrac{\reg}{\bigM} + \alpha\bigM)\abs{\pvi{}} &\text{if} \ \bigM \leq \sqrt{\reg/\alpha} \\
            \icvx(\abs{\pvi{}} \leq \bigM) + 2\reg B(\abs{\pvi{}}\sqrt{\alpha/\reg}) &\text{if} \ \bigM > \sqrt{\reg/\alpha}
        \end{cases}
    \end{align}
\end{subequations}
when $\idxentry \in \setnone$ in \eqref{eq:supp_numerics:implementation:1:3}.

\textit{\textul{Case $\bigM \leq \sqrt{\reg/\alpha}$.}} 
We have
\begin{subequations}
    \begin{alignat}{4}
        \label{eq:proof:relaxregfunc:9:1}
        \textstyle\sup_{\vvi{} \in \kR} \pvi{}\vvi{} - \pospart{\conj{\pertfunc}(\vvi{}) - \reg} &= \max\big\{
            \textstyle\sup_{\abs{\vvi{}} \leq \alpha\bigM + \reg/\bigM} \pvi{}\vvi{} \ &&, \ 
            \textstyle\sup_{\abs{\vvi{}} \geq \alpha\bigM + \reg/\bigM} \pvi{}\vvi{} - \tfrac{\vvi{}^2}{4\alpha} +  \alpha\pospart{\tfrac{\abs{\vvi{}}}{2\alpha} - \bigM}^2 + \reg
        &&\big\} \\
        \label{eq:proof:relaxregfunc:9:2}
        &= \max\big\{
            \textstyle\sup_{\abs{\vvi{}} \leq \alpha\bigM + \reg/\bigM} \pvi{}\vvi{} \ &&, \ 
            \textstyle\sup_{\abs{\vvi{}} \geq \alpha\bigM + \reg/\bigM} \pvi{}\vvi{} - \tfrac{\vvi{}^2}{4\alpha} +  \alpha(\tfrac{\abs{\vvi{}}}{2\alpha} - \bigM)^2 + \reg
        &&\big\} \\
        \label{eq:proof:relaxregfunc:9:3}
        &= \max\big\{
            (\alpha\bigM + \reg/\bigM)\abs{\pvi{}} \ &&, \ 
            \textstyle\sup_{\abs{\vvi{}} \geq \alpha\bigM + \reg/\bigM} \pvi{}\vvi{} - \bigM\abs{\vvi{}} + \alpha\bigM^2 + \reg
        &&\big\} 
    \end{alignat}
\end{subequations}
where equality \eqref{eq:proof:relaxregfunc:9:2} holds since $\bigM \leq \sqrt{\reg/\alpha} \implies \alpha\bigM + \reg\bigM \geq 2\alpha\bigM$.
Further, we remark that the supremum of the right member in \eqref{eq:proof:relaxregfunc:9:3} lies in $\kset{\vvi{} \in \kR \cup \{+\infty\}}{\vvi{} \geq 0}$ when $\pvi{} \geq 0$ and in $\kset{\vvi{} \in \kR \cup \{-\infty\}}{\vvi{} \leq 0}$ when $\pvi{} \leq 0$.
Therefore, we obtain
\begin{subequations}
    \begin{align}
        \label{eq:proof:relaxregfunc:9:4}
        \textstyle\sup_{\abs{\vvi{}} \geq \alpha\bigM + \reg/\bigM} \pvi{}\vvi{} - \bigM\abs{\vvi{}} + \alpha\bigM^2 + \reg &= \textstyle\sup_{\abs{\vvi{}} \geq \alpha\bigM + \reg/\bigM} \abs{\vvi{}}(\abs{\pvi{}} - \bigM) + \alpha\bigM^2 + \reg \\
        &= 
        \begin{cases}
            +\infty &\text{if} \ \abs{\pvi{}} > \bigM \\
            (\alpha\bigM + \reg/\bigM)\abs{\pvi{}} &\text{if} \ \abs{\pvi{}} \leq \bigM
        \end{cases} \\
        \label{eq:proof:relaxregfunc:9:5}
        &= (\alpha\bigM + \reg/\bigM)\abs{\pvi{}} + \icvx(\abs{\pvi{}} \leq \bigM)
        .
    \end{align}
\end{subequations}
Overall, combining \eqref{eq:proof:relaxregfunc:9:1}-\eqref{eq:proof:relaxregfunc:9:3} with \eqref{eq:proof:relaxregfunc:9:4}-\eqref{eq:proof:relaxregfunc:9:5} gives
\begin{subequations}
    \begin{align}
        \biconj{(\node{\separable{\regfunc}{\idxentry}})}(\pvi{}) &= \max\big\{
            (\alpha\bigM + \reg/\bigM)\abs{\pvi{}}, \ 
            (\alpha\bigM + \reg/\bigM)\abs{\pvi{}} + \icvx(\abs{\pvi{}} \leq \bigM)
        \big\} \\
        \label{eq:proof:relaxregfunc:9:6}
        &= (\alpha\bigM + \reg/\bigM)\abs{\pvi{}} + \icvx(\abs{\pvi{}} \leq \bigM)
    \end{align}
\end{subequations}
for the case $\bigM \leq \sqrt{\reg/\alpha}$.

\textit{\textul{Case $\bigM > \sqrt{\reg/\alpha}.$}} 
We have
\begin{subequations}
    \begin{alignat}{4}
        \label{eq:proof:relaxregfunc:10:1}
        \textstyle\sup_{\vvi{} \in \kR} \pvi{}\vvi{} - \pospart{\conj{\pertfunc}(\vvi{}) - \reg} &= \max\big\{
            \textstyle\sup_{\abs{\vvi{}} \leq 2\sqrt{\reg\alpha}} \pvi{}\vvi{} \ &&, \ 
            \textstyle\sup_{\abs{\vvi{}} \geq 2\sqrt{\reg\alpha}} \pvi{}\vvi{} - \tfrac{\vvi{}^2}{4\alpha} + \alpha\pospart{\tfrac{\abs{\vvi{}}}{2\alpha} - \bigM}^2 + \reg
        &&\big\} \\
        \label{eq:proof:relaxregfunc:10:2}
        &= \max\big\{
            2\sqrt{\reg\alpha}\abs{\pvi{}} \ &&, \ 
            \textstyle\sup_{\abs{\vvi{}} \geq 2\sqrt{\reg\alpha}} \pvi{}\vvi{} - \tfrac{\vvi{}^2}{4\alpha} + \alpha\pospart{\tfrac{\abs{\vvi{}}}{2\alpha} - \bigM}^2 + \reg
        &&\big\}
        .
    \end{alignat}
\end{subequations}
Further, we note that since $\bigM > \sqrt{\reg/\alpha} \implies 2\alpha\bigM > 2\sqrt{\reg\alpha}$, the right member in equation \eqref{eq:proof:relaxregfunc:10:2} splits into
\begin{subequations}
    \begin{alignat}{4}
        \label{eq:proof:relaxregfunc:11:0}
        & \textstyle\sup_{\abs{\vvi{}} \geq 2\sqrt{\reg\alpha}} \pvi{}\vvi{} - \tfrac{\vvi{}^2}{4\alpha} + \alpha\pospart{\tfrac{\abs{\vvi{}}}{2\alpha} - \bigM}^2 + \reg \\
        =& \max\big\{
            \textstyle\sup_{2\sqrt{\reg\alpha} \leq \abs{\vvi{}} \leq 2\alpha\bigM} \pvi{}\vvi{} - \tfrac{\vvi{}^2}{4\alpha} + \alpha\pospart{\tfrac{\abs{\vvi{}}}{2\alpha} - \bigM}^2 + \reg 
            \ &&, \ 
            \textstyle\sup_{\abs{\vvi{}} \geq 2\alpha\bigM} \pvi{}\vvi{} - \tfrac{\vvi{}^2}{4\alpha} + \alpha\pospart{\tfrac{\abs{\vvi{}}}{2\alpha} - \bigM}^2 + \reg
        &&\big\} \\
        \label{eq:proof:relaxregfunc:11:1}
        =& \max\big\{
            \textstyle\sup_{2\sqrt{\reg\alpha} \leq \abs{\vvi{}} \leq 2\alpha\bigM} \pvi{}\vvi{} - \tfrac{\vvi{}^2}{4\alpha} + \reg 
            \ &&, \ 
            \textstyle\sup_{\abs{\vvi{}} \geq 2\alpha\bigM} \pvi{}\vvi{} - \tfrac{\vvi{}^2}{4\alpha} + \alpha(\tfrac{\abs{\vvi{}}}{2\alpha} - \bigM)^2 + \reg
        &&\big\} \\
        \label{eq:proof:relaxregfunc:11:2}
        =& \max\big\{
            \textstyle\sup_{2\sqrt{\reg\alpha} \leq \abs{\vvi{}} \leq 2\alpha\bigM} \pvi{}\vvi{} - \tfrac{\vvi{}^2}{4\alpha} + \reg 
            \ &&, \ 
            \textstyle\sup_{\abs{\vvi{}} \geq 2\alpha\bigM} \pvi{}\vvi{} - \bigM\abs{\vvi{}} + \alpha\bigM^2 + \reg
        &&\big\} 
        .
    \end{alignat}
\end{subequations}
On the one hand, the left member in equation \eqref{eq:proof:relaxregfunc:11:2} can be expressed in closed-form as
\begin{align}
    \textstyle\sup_{2\sqrt{\reg\alpha} \leq \abs{\vvi{}} \leq 2\alpha\bigM} \pvi{}\vvi{} - \tfrac{\vvi{}^2}{4\alpha} + \reg 
    &=
    \reg
    -\textstyle\inf_{2\sqrt{\reg\alpha} \leq \abs{\vvi{}} \leq 2\alpha\bigM} \tfrac{\vvi{}^2}{4\alpha} - \pvi{}\vvi{}
    \label{eq:proof:relaxregfunc:square abs + interval:buf biconjugate}
    \\
    &= \begin{cases}
        2\sqrt{\reg\alpha} \abs{\pvi{}} &\text{if} \ \abs{\pvi{}} \leq \sqrt{\reg/\alpha} \\
        \alpha\pvi{}^2 + \reg &\text{if} \ \abs{\pvi{}} \in \ ]\sqrt{\reg/\alpha}, \bigM] \\
        2\alpha\bigM\abs{\pvi{}} - \alpha\bigM^2 + \reg &\text{if} \ \abs{\pvi{}} > \bigM
    \end{cases}
    \label{eq:proof:relaxregfunc:12:1}
\end{align}
by noting that the scalar defined as the orthogonal projection of \(2\alpha\pvi{}\) onto the interval \([2\sqrt{\reg\alpha},2\alpha\bigM]\) satisfies the necessary and sufficient first order optimality condition (see \textit{e.g.} Corollary 3.68 in~\cite{beck2017first}) associated to the convex minimization problem involved in the right-hand side of~\eqref{eq:proof:relaxregfunc:square abs + interval:buf biconjugate}.
On the other hand, we remark that the supremum of the right member in equation \eqref{eq:proof:relaxregfunc:11:2} lies in $\kset{\vvi{} \in \kR \cup \{+\infty\}}{\vvi{} \geq 0}$ when $\pvi{} \geq 0$ and in $\kset{\vvi{} \in \kR \cup \{-\infty\}}{\vvi{} \leq 0}$ when $\pvi{} \leq 0$, which yields
\begin{subequations}
    \begin{align}
        \label{eq:proof:relaxregfunc:12:2}
        \textstyle\sup_{\abs{\vvi{}} \geq 2\alpha\bigM} \pvi{}\vvi{} - \bigM\abs{\vvi{}} + \alpha\bigM^2 + \reg &= \textstyle\sup_{\abs{\vvi{}} \geq 2\alpha\bigM} \abs{\vvi{}}(\abs{\pvi{}} - \bigM) + \alpha\bigM^2 + \reg \\
        &= 
        \begin{cases}
            +\infty &\text{if} \ \abs{\pvi{}} > \bigM \\
            2\alpha\bigM\abs{\pvi{}} - \alpha\bigM^2 + \reg &\text{if} \ \abs{\pvi{}} \leq \bigM
        \end{cases} \\
        \label{eq:proof:relaxregfunc:12:3}
        &= 2\alpha\bigM\abs{\pvi{}} - \alpha\bigM^2 + \reg + \icvx(\abs{\pvi{}} \leq \bigM)
        .
    \end{align}
\end{subequations}
By plugging \eqref{eq:proof:relaxregfunc:12:1} and \eqref{eq:proof:relaxregfunc:12:2}-\eqref{eq:proof:relaxregfunc:12:3} into each member of \eqref{eq:proof:relaxregfunc:11:0}-\eqref{eq:proof:relaxregfunc:11:2}, we deduce that
\begin{subequations}
    \begin{alignat}{4}
        \label{eq:proof:relaxregfunc:13:1}
        & \textstyle\sup_{\abs{\vvi{}} \geq 2\sqrt{\reg\alpha}} \pvi{}\vvi{} - \tfrac{\vvi{}^2}{4\alpha} + \alpha\pospart{\tfrac{\abs{\vvi{}}}{2\alpha} - \bigM}^2 + \reg \\
        \label{eq:proof:relaxregfunc:13:2}
        =& 
        \begin{cases}
            \max\big\{
                2\sqrt{\reg\alpha}\abs{\pvi{}}
                \qquad\qquad\quad \, , \
                2\alpha\bigM\abs{\pvi{}} - \alpha\bigM^2 + \reg
            \big\} &\text{if} \ \abs{\pvi{}} \leq \sqrt{\reg/\alpha} \\
            \max\big\{
                \alpha\pvi{}^2 + \reg
                \qquad\qquad\quad \ \, , \
                2\alpha\bigM\abs{\pvi{}} - \alpha\bigM^2 + \reg
            \big\} &\text{if} \ \abs{\pvi{}} \in \ ]\sqrt{\reg/\alpha},\bigM] \\
            \max\big\{
                2\alpha\bigM\abs{\pvi{}} - \alpha\bigM^2 + \reg 
                \ , \
                +\infty
            \qquad\qquad\qquad \ \ \ \, \big\} &\text{if} \ \abs{\pvi{}} > \bigM \\
        \end{cases} \\
        \label{eq:proof:relaxregfunc:13:3}
        =& 
        \begin{cases}
            2\sqrt{\reg\alpha}\abs{\pvi{}} &\text{if} \ \abs{\pvi{}} \leq \sqrt{\reg/\alpha} \\
            \alpha\pvi{}^2 + \reg &\text{if} \ \abs{\pvi{}} \in \ ]\sqrt{\reg/\alpha},\bigM] \\
            +\infty &\text{if} \ \abs{\pvi{}} > \bigM \\
        \end{cases}
    \end{alignat}
\end{subequations}
where equality \eqref{eq:proof:relaxregfunc:13:2} holds since $2\sqrt{\reg\alpha} \leq 2\alpha\bigM$ and $-\alpha\bigM^2 + \reg \leq 0$, reminding that we consider the case $\bigM > \sqrt{\reg/\alpha}$.
Overall, combining \eqref{eq:proof:relaxregfunc:10:1}-\eqref{eq:proof:relaxregfunc:10:2} with \eqref{eq:proof:relaxregfunc:13:1}-\eqref{eq:proof:relaxregfunc:13:3} gives
\begin{subequations}
    \begin{align}
        \biconj{(\node{\separable{\regfunc}{\idxentry}})}(\pvi{}) &=
        \begin{cases}
            2\sqrt{\reg\alpha}\abs{\pvi{}} &\text{if} \ \abs{\pvi{}} \leq \sqrt{\reg/\alpha} \\
            \alpha\pvi{}^2 + \reg &\text{if} \ \abs{\pvi{}} \in \ ]\sqrt{\reg/\alpha},\bigM] \\
            +\infty &\text{if} \ \abs{\pvi{}} > \bigM \\
        \end{cases} \\
        \label{eq:proof:relaxregfunc:14}
        &= 2\reg B(\abs{\pvi{}}\sqrt{\alpha/\reg}) + \icvx(\abs{\pvi{}} \leq \bigM)
    \end{align}
\end{subequations}
for the case $\bigM > \sqrt{\reg/\alpha}$.

\subsection{Hyperparameters Calibration}
\label{sec:supp_numerics:hyperparameters}

To calibrate the value of $\reg$ in \eqref{prob:prob} and the hyperparameters of the function $\pertfunc(\cdot)$, we use the \texttt{l0learn} package \cite{dedieu2021learning} that can approximately solve some specific instances of the problem.
More specifically, we call the \texttt{cv.fit} procedure to perform a grid search over the values of $\reg$ and the hyperparameters of the function $\pertfunc(\cdot)$.
For each point in the grid, an approximate solution $\hat{\pv}$ to problem \eqref{prob:prob} is constructed and an associated cross-validation score is computed.
We select the value of $\reg$ and the hyperparameters in $\pertfunc(\cdot)$ leading to the best cross-validation score.
For the synthetic instances considered in \Cref{sec:numerics:synthetic}, we only consider the candidates $\hat{\pv}$ in the grid with the best F1-score for the support recovery of the ground truth $\groundtruth$.

\end{document}